     \def\section{\@startsection{section}{1}%
     \z@{.7\linespacing\@plus\linespacing}{.5\linespacing}%
     {\bfseries
     \centering
     }}
     \def\@secnumfont{\bfseries}
\newtheorem{theorem}{Theorem}[section]
\newtheorem{lemma}[theorem]{Lemma}
\newtheorem{corollary}[theorem]{Corollary}
\theoremstyle{definition}
\theoremstyle{remark}
\newtheorem{remark}[theorem]{{\bf  Remark}}
\numberwithin{equation}{section}
\font\sevenrm =cmr10 at  7  pt
\def\ddate {\sevenrm \ifcase\month\or January\or
February\or March\or April\or May\or June\or July\or
August\or September\or October\or November\or December\fi\! {\the\day}, \!{\sevenrm\the\year}}
\def \a{{\alpha}}
\def \b{{\beta}}
\def \d{{\delta}}
\def \e{{\varepsilon}}
\def \O{{\Omega}}
\def \p{{\varphi}}
\def \t{{\vartheta}}
\def \m{{\mu}}
\def \s{{\sigma}}
\def \A{{\mathcal A}}
\def \C{{\mathcal C}}
\def \N{{\bf N}}
\def \P{{\bf P}}
\def \qq{{\qquad}}
\def \R{{\bf R}}
\def \Z{{\bf Z}}
\def \dd{{\rm  d }}
\def \noi{{\noindent}}
\def\E{{\mathbb E}}
\def\P{{\mathbb P}}
\def\R{{\mathbb R}}
\def\Z{{\mathbb Z}}
\def\N{{\mathbb N}}
\def\C{{\mathbb C}}  
\font\phh=cmcsc10
   \font\phh=cmr10 at  8,2 pt
   \title[\rm 
  A uniform semi-local limit theorem along    sets of multiples for i.i.d.  sums]
  {A uniform semi-local limit theorem along    sets of multiples   for sums of i.i.d. 
   random variables}
  \author{ Michel  J.\ G. Weber 
}
\address{ Michel Weber: IRMA, Universit\'e
Louis-Pasteur et C.N.R.S.,   7  rue Ren\'e Descartes, 67084   
Strasbourg Cedex, France. }
\email{michel.weber@math.unistra.fr ; m.j.g.weber@mailo.com
}
\begin{document}
 \maketitle

\renewcommand{\thefootnote}{} {{
\footnote{2010 \emph{Mathematics Subject Classification}:  {\phh Primary: 60F15, 60G50 ;
Secondary: 60F05}. 
}
\footnote{\emph{Key words and phrases}: semi-local limit theorem, divisors, Bernoulli random variables, i.i.d. sums, Theta functions, distribution, prime numbers.}
 \renewcommand{\thefootnote}{\arabic{footnote}}
\setcounter{footnote}{0}
  \begin{abstract}   Let $X  $  be  a square integrable random variable  with basic probability space $(\O, \A, \P)$,   taking values in a  lattice $\mathcal L(v_0,1)=\big\{v_k=v_0+ k,k\in \Z\big\}$ and such that $\t_X   =\sum_{k\in \Z}\P\{X=v_k\}\wedge \P\{X=v_{k+1}\}>0$. Let $  X_i$, $i\ge 1 $  be  independent, identically distributed random variables having same law than $X$, and let $S_n=\sum_{j=1}^nX_j$, for each $n$. Let $\m_k\ge 0$ be such that $ \m= \sum_{k\in \Z}\m_k $ verifies $1-  \t_X<\m<1$, noting that $\t_X< 1$ always. Further let $\t=1-\m$, $s(t) =\sum_{k\in \Z} \m_k\, e^{ 2i \pi   v_kt}$ and    $\rho$ be such  that  $1-\t<\rho<1$.  
 We prove the following uniform semi-local theorems for the class $\mathcal F=\{F_{d},   d\ge 2\}$, where $F_{d}= d\N$. 
 \noi(i)
There exists  
$\theta=\theta(\rho,\t)$ with  $ 0< \theta <\t$,  
$C$  and $N$ 
 such that for $  n \ge N$,
\begin{align*}   \sup_{u\ge 0}\,\sup_{d\ge 2} \Big| \P \{ S_n+u\in F_{d}  \}   -  {1\over d}-  {1\over d}\sum_{ 0< |\ell|<d }& \Big( e^{ (i\pi {\ell\over   d }-{ \pi^2\ell^2\over 2 d^2}) } \t\,
 \E \,e^{2i \pi {\ell\over   d }\widetilde X  }   +s\big(  {\ell\over   d }\big)\Big)^n \Big|
  \cr  &\le    \frac{C }{ \theta ^{3/2}}\   \frac{(\log  n)^{5/2}}{ n^{3/2}}+2\rho^n.
\end{align*}
 \vskip 1 pt  \noi(ii) Let   $\mathcal D$ be a test set of divisors $\ge 2$,   $\mathcal D_\p$ be the section of $\mathcal D$ at height $\p$  and $|\mathcal D_\p|$ denote its   cardinality. Then,
    \begin{eqnarray*}
      \sum_{n=N}^\infty \   \sup_{u\ge 0} \, \sup_{\p\ge 2}\, {1\over |\mathcal D_\p |} \sum_{d\in \mathcal D_\p } \,\Big| \P \{d|S_n+u  \}   - {1\over d}\Big|
    & \le  &    \frac{C_1}{\t}
 \,  
         +  \frac{C_2 }{ \theta ^{3/2}} +\frac{2\rho^2}{1-\rho},
\end{eqnarray*}
where $C_1=\frac{2e^{     { \pi^2/ 4 }  }}
     {(1-e^{   -    {  \pi^2/ 16} }   ) }$, $C_2= C\,  \sum_{n=N}^\infty  \frac{(\log {n})^{5/2} }{ ({n})^{3/2}}$\,.      \end{abstract}

\section{Introduction}\label{s1}
   
\subsection{Problem studied}   Let $X  $  be  a square integrable random variable    taking values in a  lattice $\mathcal L(v_{
0},D )=v_0+D\,\Z$,
and let $\m={\mathbb E\,} X$, $\s={\rm Var }(
X)$.
Let $  X_i$, $i\ge 1 $  be  independent, identically distributed random variables having same law than $X$, and let $S_n=\sum_{j=1}^nX_j$, $n\ge1$. 
\vskip   2 pt 
The  probability 
 $\P\{S_n=N\}$ 
can be efficiently estimated by using Gnedenko's local limit theorem \cite{G}, which asserts that  \begin{equation}\label{llt.iid}   
 \sup_{
 N\in  \mathcal L( v_{ 0}n,D )}\Big|  \s \sqrt{n}\, {\mathbb P}\{S_n=N\}-{D\over  \sqrt{ 2\pi } }e^{-
{(N-n\m)^2\over  2 n\s^2} }\Big| = o(1),
\end{equation} 
if and only if the span $D$ is maximal,
(there are no  other real numbers
$v'_{0}
$ and
$D' >D$ for which
${\mathbb P}\{X
\in\mathcal L(v'_0,D')\}=1$).
\vskip 5 pt 
 Now consider the problem of estimating the probability 
$$ \P\{S_n\in F\}$$
where $F$ is a  possibly infinite  subset  of $\mathcal L( v_{ 0}n,D )$. This is obviously a quite important problem, which attracted a lot of attention. 
When $F$ moreover runs into a parametrized class $\mathcal F$ of subsets of $\mathcal L( v_{ 0}n,D )$, that problem can be   viewed as the search of  a semi-local   limit theorem with respect to   $\mathcal F$, thereby extending the classical   local   limit theorem to   parametrized classes of possibly infinite sets. 

\vskip 3 pt One can naturally think of using the local limit theorem for estimating $ \P\{S_n\in F\}$ or refinements of it. However, even on simple cases such as sets of multiples of an integer, the application of the local limit theorem  becomes rapidly unefficient.  
\vskip 5 pt 


Let 
\begin{equation}\label{multiples}\mathcal F=\{F_d, d\ge 2\} \qq \text{where} \qq F_d= d\,\N.
\end{equation}
\vskip 5 pt 
On the example below, we show   that    for Bernoulli sums already,  the application of a sharp form of the local limit theorem    fails from far to  provide a satisfactory bound. The question whether  
   a semi-local limit theorem for the class $\mathcal F$ defined above may be valid  for sequences  of i.i.d. square integrable $\mathcal L(v_{
0},1 ) $-valued random variables, has motivated us in this work and we will   prove that under moderate conditions it is indeed so, which is main result  of this paper. 
\vskip 5 pt 

\vskip 5 pt 

\subsection{An example} Let  $B_n=\b_1+\ldots+\b_n$,  where   $ \b_i  $ are i.i.d.  Bernoulli random variables.  On the one hand, 
 Theorem II in \cite{W1} provides a   uniform estimate on the whole range of  $d$, with   sharp rate of approximation.   
  \begin{theorem} \label{t1} 
   We have the following uniform estimate: 
\begin{equation}\label{unif.est.theta}\sup_{2\le d\le n}\Big|\P\big\{d|B_n\big\}- {\Theta(d,n)\over d}  \Big|= {\mathcal O}\big((\log n)^{5/2}n^{-3/2}\big),  
\end{equation}
where $\Theta (d,m) $ is the elliptic Theta function
\begin{equation}\label{theta}\displaystyle{ \Theta (d,m)  =  \sum_{\ell\in \Z} e^{im\pi{\ell\over   d }-{m\pi^2\ell^2\over 2 d^2}}. 
   }
   \end{equation}
 \end{theorem}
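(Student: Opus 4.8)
\medskip
\noindent\textbf{Proof proposal.} Rather than summing a local limit theorem along the progression $d\,\N$ (which does not readily give the $n^{-3/2}$ saving), the plan is to run the roots-of-unity (discrete Fourier) argument on $\Z/d\Z$ and to compare the resulting exponential sum with its Gaussian model. Since the $\b_i$ are i.i.d.\ symmetric Bernoulli, $\E\, e^{2i\pi t\b_1}=\tfrac12(1+e^{2i\pi t})=e^{i\pi t}\cos\pi t$, whence for every $d\ge2$
\begin{equation*}
\P\{d\,|\,B_n\}=\frac1d\sum_{\ell=0}^{d-1}\big(\E\, e^{2i\pi(\ell/d)\b_1}\big)^n=\frac1d+\frac1d\sum_{\ell=1}^{d-1}e^{i\pi n\ell/d}\cos^n\!\Big(\frac{\pi\ell}{d}\Big).
\end{equation*}
Pairing $\ell$ with $d-\ell$ (using $\cos(\pi-x)=-\cos x$ and $e^{i\pi n}=(-1)^n$) symmetrises this to $\frac1d\sum_{0<|\ell|\le d/2}e^{i\pi n\ell/d}\cos^n(\pi\ell/d)$, whose target value is precisely the $\ell\neq0$ part of $\Theta(d,n)/d$. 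Splitting the theta series the same way, the task reduces to estimating
\begin{equation*}
E_1:=\frac1d\sum_{0<|\ell|\le d/2}e^{i\pi n\ell/d}\Big(\cos^n\!\big(\tfrac{\pi\ell}{d}\big)-e^{-\pi^2 n\ell^2/(2d^2)}\Big)\quad\text{and}\quad E_2:=\frac1d\sum_{|\ell|>d/2}e^{-\pi^2 n\ell^2/(2d^2)},
\end{equation*}
and $E_2=O(e^{-\pi^2 n/8})$ because $d\le n$ makes the tail of the theta series uniformly geometric.

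For $E_1$ I would invoke two elementary facts about $\cos$: first, $\log\cos x\le-x^2/2$ on $[0,\pi/2)$ (a consequence of $\tan x\ge x$) together with $\log\cos x+x^2/2=O(x^4)$ uniformly on compact subintervals; second, $|\cos\pi t|\le e^{-2t^2}$ for every $t\in[0,\tfrac12]$ (the function $-2t^2-\log\cos\pi t$ vanishes to second order at $0$ and is convex, its second derivative being $\ge\pi^2-4>0$). With $x=\pi\ell/d$, the first fact puts the Gaussian exponent on top in $|e^a-e^b|\le e^{\max(a,b)}|a-b|$, giving, for $|\ell|/d\le\tfrac14$,
\begin{equation*}
\Big|\cos^n\!\big(\tfrac{\pi\ell}{d}\big)-e^{-\pi^2 n\ell^2/(2d^2)}\Big|\le C\,e^{-\pi^2 n\ell^2/(2d^2)}\,\frac{n\ell^4}{d^4},
\end{equation*}
while the second fact gives $|\cos^n(\pi\ell/d)|\le e^{-2n\ell^2/d^2}$ for all $\ell$. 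Now split $0<|\ell|\le d/2$ at the radius $K=K(d,n)=\big\lceil(d/\sqrt n)\sqrt{A\log n}\big\rceil$ with $A$ a large absolute constant (the cases $d\in\{2,3\}$ being elementary and set aside, so that $K/d\le\tfrac14$ for the remaining $d$ and $n\ge N$). On the shoulder $K<|\ell|\le d/2$ both $e^{-2n\ell^2/d^2}$ and $e^{-\pi^2 n\ell^2/(2d^2)}$ are $<n^{-A}$, so that range contributes $O(n^{-A})$. On the bulk $|\ell|\le K$ I would bound the last display crudely, replacing $e^{-\pi^2 n\ell^2/(2d^2)}$ by $1$ and $\ell^4$ by $K^4$:
\begin{equation*}
\frac{C}{d}\cdot K\cdot\frac{nK^4}{d^4}=\frac{C\,nK^5}{d^5}=O\!\left(\frac{(\log n)^{5/2}}{n^{3/2}}\right),
\end{equation*}
which is exactly the asserted rate, the power of $\log n$ being the price of the crude summation (when $K=1$ and $d$ is small one keeps the exponential, which is then already $<n^{-A}$). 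Adding the three contributions gives $\sup_{2\le d\le n}\big|\P\{d\,|\,B_n\}-\Theta(d,n)/d\big|=O\big((\log n)^{5/2}n^{-3/2}\big)$.

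The one genuinely delicate point is the \emph{uniformity in $d$ over the entire range $2\le d\le n$}, and it is concentrated in the calibration of the splitting radius $K$: it must be large enough that, after multiplication by the at most $d\le n$ summands on the shoulder, the bound still beats $n^{-3/2}$, yet small enough that $K/d\to0$ (so that the quadratic expansion of $\log\cos$ applies throughout the bulk) and $n(K/d)^4\to0$. The choice $K\asymp(d/\sqrt n)\sqrt{\log n}$ is the one meeting these competing demands, and it is what forces the $(\log n)^{5/2}$ factor. For the explicit values of the constants and of $N$ I would follow the argument of Theorem~II in \cite{W1}.
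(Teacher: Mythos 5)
Your proposal is correct and follows essentially the same route as the paper's proof (of the drifted generalization in Section 3, which contains Theorem \ref{t1} as the case $u=0$): discrete Fourier inversion on $\Z/d\Z$, symmetrization to the first quadrant, a cut at the radius $K\asymp d\sqrt{\log n/n}$ (the paper's angular cutoff $\p_n=\sqrt{2\a\log n/n}$ is the same threshold), the Taylor comparison $\log\cos x+x^2/2=O(x^4)$ with $|e^a-e^b|\le e^{\max(a,b)}|a-b|$ in the bulk, exponential smallness of both the cosine and Gaussian terms on the shoulder, and a geometric bound for the theta tail $|\ell|>d/2$. The only differences are cosmetic choices of elementary inequalities and constants.
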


 The constant hidden in the symbol $\mathcal O$ is absolute. 
Special cases are also considered in \cite[Th.\,II]{W1}. 
It  is notably proved that  
\begin{equation}\label{(1.2)}
  \big|\P\big\{d|B_n\big\}- {1\over d}  \big|\le
    \begin{cases}  C\Big((\log n)^{5/2}n^{-3/2}+ { 1\over d}
 e^{ - {n \pi^2 \over 2d^2}} \Big)   & \quad  {\rm if}\  d\le \sqrt n,\cr 
      {C \over\sqrt n}   & \quad  {\rm if} \  \sqrt n\le d\le n.
\end{cases}
\end{equation}
 For any $\a>0$  
\begin{equation}\label{unif.est.theta.alpha} \sup_{d<  \pi   \sqrt{   n \over 2\a\log n}}\big|\P\big\{  d|   B_{ n} \big\}-{1\over d} 
\big|= {\mathcal O}_\e\big(n^{-\a+\e }\big),\qq \quad (\forall \e>0)  
\end{equation}   
 and for any $0<\rho<1 $, 
\begin{equation}\label{unif.est.theta.rho}  \sup_{d<  (\pi/\sqrt 2) n^{(1-\rho)/2} }\big|\P\big\{  d|   B_{ n} \big\}-{1\over d} 
\big|= {\mathcal O}_\e\big(e^{-(1-\e) n^\rho}\big),\qq \quad (\forall 0<\e<1). 
\end{equation}   
\vskip 3 pt On the other hand, the sharpest   version of the local limit theorem for Bernoulli sums is derived  from a fine local limit theorem with asymptotic expansion, see      \cite{P},  Ch. 7, Th.\,13., and states as follows,       
   \begin{eqnarray}\label{lltber} \sup_{z}\, \Big|  \P\big\{B_n=z\} 
 -\sqrt{\frac{2}{\pi n}} e^{-{ (2z-n)^2\over
2 n}}\Big| =o\Big(\frac{1}{n^{3/2}}\Big). 
  \end{eqnarray} 
    Although the   error term   is   sharp, \eqref{lltber}  will yield 
   a less precise result than Theorem \ref{t1}. 
   \vskip 2 pt
Indeed, at first there is a numerical constant $a$ such that
\begin{eqnarray*}  \P\{  \big|{2B_n-n\over \sqrt n } \big|> a\sqrt{     \log n}\}    \le    n^{-1} , \qq \quad n\ge 1 .
\end{eqnarray*}
 Thus using \eqref{lltber},
 \begin{eqnarray*}
 \P \{ d | B_n \}&=&  \sum_{|z-n/2|\le a\sqrt{    n \log n}\atop z\equiv 0\, (d)} \P\big\{B_n=z\}+ \mathcal
O(n^{-1}) 
\cr &=&    \sqrt{\frac{2}{\pi n}}\sum_{|z-n/2|\le 
 a\sqrt{    n \log n}\atop z\equiv 0\, (d)} e^{-{ (2z-n)^2\over
2 n}}+ o(  \frac{\sqrt{  \log n}}{n}   ).
\end{eqnarray*}

Besides, noticing that $2z-n$ is integer, we have for $n\ge 2$  
\begin{eqnarray*}
 \sum_{|z-n/2|> a\sqrt{    n \log n} } e^{-{ (2z-n)^2\over
2 n}}
&\le & \sum_{ Z> 2a\sqrt{    n \log n} \atop Z\in \N} e^{-{ Z^2\over
2 n}}\ \le \ \int_{a\sqrt{    n \log n}}^\infty e^{-{t^2\over 2n}} \dd t 
\cr &=&  \sqrt n\int_{a\sqrt{      \log n}}^\infty e^{-{u^2\over 2n}} \dd u\ = \ o(n^{-1}).
\end{eqnarray*}
 Consequently \eqref{lltber} implies,
\begin{eqnarray} \label{dllt0}\sup_{2\le d\le n}\Big|\P \{ d | B_n \}-\sqrt{\frac{2}{\pi n}}\sum_{ 
z\equiv 0\, (d)} e^{-{ (2z-n)^2\over 2 n}}\Big|= o(  \frac{\sqrt{  \log n}}{n}  ).
\end{eqnarray} 
\vskip 5 pt 

We now compare \eqref{unif.est.theta} with \eqref{dllt0}  by  using  Poisson summation formula which we recall: for   $x\in \R,\  0\le
\d\le 1
$, 
\begin{equation}\label{poisson}\sum_{\ell\in \Z} e^{-(\ell+\d)^2\pi x^{-1}}=x^{1/2} \sum_{\ell\in \Z}  e^{2i\pi \ell\d -\ell^2\pi x}.
 \end{equation}  
Applying it  with  $x=\pi
n/(2d^2)$, 
$\d=\{n/(2d)\}$, gives
\begin{equation}\label{poisson.comp.theta.llt}\frac{\Theta(d,n)}{ d}  =\sqrt{  { 2\over \pi
n}}  \sum_{ 
z\equiv 0\, (d)} e^{-{ (2z-n)^2\over 2 n}}.
 \end{equation} 
   Estimate  \eqref{unif.est.theta} thus  implies
\begin{equation}\label{poisson.comp.theta.unif}   \sup_{2\le d\le n}\Big|\P\big\{d|B_n\big\}- \sqrt{  { 2\over \pi
n}}\sum_{ 
z\equiv 0\, (d)} e^{-{ (2z-n)^2\over 2 n}}  \Big|=
{\mathcal O}\Big({ \log^{5/2} n  \over n^{ 3/2}}\Big),
\end{equation} 
 which is clearly  much better   than   (\ref{dllt0}).  




\section{Main result}\label{s2}
In this paper we  extend 
the semi-local limit 
Theorem \ref{t1} to sums of i.i.d. square integrable random variables. Clearly this   cannot be derived  from Gnedenko's local limit theorem. Note by the way that according to Matskyavichyus   \cite{Mat}, Gnedenko's theorem is optimal.  In place we shall in a first step prove a drifted version of  Theorem \ref{t1} for value distribution of divisors of $B_n + u$, which turns up to be uniform in $u$, and next combine it with a coupling method called the  \lq\lq {\it Bernoulli part extraction}\rq\rq\, of a random variable. This method  is essentially  due to Mc Donald \cite{M}, \cite{MD}, see also \cite{GW3}, and was successfully applied to the local limit theorem.  However, the problem investigated here being more complicated than the usual local limit theorem, it was necessary to  refine it somehow.
 The approach we propose is flexible and should be adaptable  to other parametrized classes of sets.

\vskip 5pt
 Let $X$ be
  a random variable   such that  $\P\{X
\in\mathcal L(v_0,1)\}=1$, and let $f(k)=\P\{X=v_k\}$ for all $k\in \Z$.   
 We use the following structural characteristic \begin{eqnarray}\label{varthetaX.def}  \t_X  :=\sum_{k\in \Z}f(k)\wedge f(k+1), 
\end{eqnarray} 
and note that $0\le \t_X<1$ always. We assume that  the following condition   is satisfied,
\begin{eqnarray}\label{varthetaX}  \t_X >0.
\end{eqnarray} 

This is possible only if the maximal span of $X$ is $1$, which we do assume. In our approach  we associate   to   $X$,  another random variable $\widetilde X$ close to $X$,  which we shall now describe. 

\vskip 3 pt First let $ \bar \m= \{ \m_k, k\in \Z\}$ be a given sequence of non-negative reals satisfying the following simple condition: 
\begin{eqnarray}\label{muk}\qq\hbox{\it{ For all $k\in \Z$, $0<\m_k < f(k) $ if $f(k)>0$, and $\m_k=0$ if $f(k)=0$}. }
\end{eqnarray}

\vskip 3 pt \noi Let  $\m= \sum_{k\in \Z}\m_k$, and assume that 
\begin{eqnarray}\label{mu}
1-\m<\t_X. 
\end{eqnarray} 

\vskip 3 pt Next let  $\bar \tau=  \{ \tau_k, k\in \Z\}$ be a sequence of non-negative reals satisfying  the equation   
\begin{equation}\label{tau.mu} {\tau_{k-1}+\tau_k\over 2} =f(k)  - \m_k,
\end{equation}  for all $k \in \Z$.

 \vskip 2 pt 
 This equation is solvable, the solutions are given in \eqref{tau.mu1.sol}. By construction 
 $ \tau_{k-1}+\tau_k\le 2f(k)$, for all $k \in \Z$. 
\vskip 3 pt 
 
 Put 
\begin{equation}\label{basber1}   \t = \sum_{k\in \Z}  \tau_k = 1- \m.
\end{equation}
The sequence  $\bar \tau$ only depends on the  random variable $X$ and the sequence $\bar \m$.   We associate  to $X$  and $\bar \m$ a random variable $\widetilde X$  defined by the relation
\begin{equation}\label{widetildeX} \P\{ \widetilde X =v_k\}=\frac{\tau_k}{\t}, \qq  k\in \Z   .
\end{equation}
    
 Note that     as   $\tau_k$, $\m_k$ are defined independently of $n$, and so $\widetilde X$ is. 
  
\vskip 3 pt Our main result states as follows.

\begin{theorem} \label{t2} Let $X  $  be  a square integrable random variable    taking values in a  lattice $\mathcal L(v_{
0},1 )$, with maximal span $1$ and satisfying condition \eqref{varthetaX}. Let $  X_i$, $i\ge 1 $  be  independent, identically distributed random variables having same law than $X$, and let $S_n=\sum_{j=1}^nX_j$, for each $n$. Further let $s(t) =\sum_{k\in \Z} \m_k\, e^{ 2i \pi   v_kt}$,      $\rho$ be such  that  $1-\t<\rho<1$, $\t$ being defined in \eqref{basber1}, and $\widetilde X$ be defined in \eqref{widetildeX}.
  \vskip 2 pt  {\rm (i)}  There exists
$\theta=\theta(\rho,\t)$ with  $ 0< \theta <\t$ and  
$C$ and $N$ 
 such that we have for all $  n \ge N$,
\begin{align*}    \sup_{u\ge 0}\,\sup_{d\ge 2} \Big| \P \{d|S_n +u \}   -  {1\over d}-  {1\over d}\sum_{ 0< |\ell|<d } \Big( e^{ (i\pi {\ell\over   d }-{ \pi^2\ell^2\over 2 d^2}) } &\t\,
 \E \,e^{2i \pi {\ell\over   d }\widetilde X  }   +s\big(  {\ell\over   d }\big)\Big)^n \Big|
  \cr  &\le    \frac{C }{ \theta ^{3/2}}\   \frac{(\log  n)^{5/2}}{ n^{3/2}}+2\rho^n.
\end{align*}

{\rm (ii)} Let   $\mathcal D$ be a test set of divisors $\ge 2$,   $\mathcal D_\p$ be the section of $\mathcal D$ at height $\p$  and $|\mathcal D_\p|$ denote its   cardinality. Then,
    \begin{eqnarray*}
     \sum_{n=N}^\infty \,  \sup_{u\ge 0} \, \sup_{\p\ge 2}\, {1\over |\mathcal D_\p |} \sum_{d\in \mathcal D_\p } \,\Big| \P \{d|S_n+u  \}   - {1\over d}\Big|
   & \le  &    \frac{C_1}{\t}
 \,  
         +  \frac{C_2 }{ \theta ^{3/2}} +\frac{2\rho^2}{1-\rho},
\end{eqnarray*}
where $C_1=\frac{2e^{     { \pi^2/ 4 }  }}
     {(1-e^{   -    {  \pi^2/ 16} }   ) }$, $C_2= C\,  \sum_{n=N}^\infty  \frac{(\log {n})^{5/2} }{ ({n})^{3/2}}$.\end{theorem}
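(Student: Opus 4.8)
The plan is to combine a drift-uniform refinement of Theorem~\ref{t1} with the Bernoulli part extraction of $X$ set up in \eqref{tau.mu}--\eqref{widetildeX}. After replacing $X$ by $X-v_0$ we may assume $X$ is $\Z$-valued, so that $S_n\in\Z$ and only non-negative integers $u$ need be considered. The first ingredient is the following drifted form of Theorem~\ref{t1}: with $B_m$ the symmetric Bernoulli sum, for every $d\ge 2$, every $m$ and every integer $w$,
\[
\P\{d\mid B_m+w\}=\frac1d\sum_{\ell=0}^{d-1}e^{2i\pi\ell w/d}\,e^{im\pi\ell/d}\cos^m(\pi\ell/d),
\]
and the difference between the right-hand side and $\frac1d\sum_{\ell=0}^{d-1}e^{2i\pi\ell w/d}e^{im\pi\ell/d}e^{-m\pi^2\ell^2/(2d^2)}$ is $O\bigl((\log m)^{5/2}m^{-3/2}\bigr)$, uniformly in $w$ and $d$. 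This is just Theorem~\ref{t1} --- equivalently \eqref{unif.est.theta} in the form \eqref{poisson.comp.theta.unif} --- carried out with the extra weight $e^{2i\pi\ell w/d}$: being of modulus $1$, that weight is absorbed in the $\ell$-by-$\ell$ comparison of $\cos^m(\pi\ell/d)$ with its Gaussian proxy and changes no estimate, and the regime $d>m$ left out in Theorem~\ref{t1} is the favourable one, the two quantities being there even closer.

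The second ingredient is the identity, obtained from \eqref{tau.mu}--\eqref{widetildeX} by the shift $\sum_k\tau_{k-1}e^{2i\pi v_kt}=e^{2i\pi t}\sum_k\tau_k e^{2i\pi v_kt}$,
\[
\E\,e^{2i\pi Xt}=s(t)+\t\,\frac{1+e^{2i\pi t}}{2}\,\E\,e^{2i\pi\widetilde Xt},
\]
equivalently: the law of $X$ is $\m\,\nu_1+\t\,\nu_2$, where $\nu_1=(\m_k/\m)_k$ and $\nu_2$ is the law of $\b+\widetilde X$ with $\b$ a Bernoulli$(1/2)$ variable independent of $\widetilde X$. (Solvability of \eqref{tau.mu} with $\tau_k\ge 0$, i.e.\ legitimacy of this extraction, is exactly where \eqref{mu} is used.) Hence one can realise the $X_j$'s so that each is, independently, with probability $\t=1-\m$ of the form $\b_j+\widetilde X_j$ (``Bernoulli type''), and with probability $\m$ a draw $Y_j\sim\nu_1$. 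Let $I_j$ be the indicator that $j$ is Bernoulli type, $K_n=\sum_{j\le n}I_j\sim\mathrm{Bin}(n,\t)$, $B=\sum_{j\le n}I_j\b_j$ and $T_n=S_n-B$; conditionally on the configuration, $B$ is a sum of $K_n$ i.i.d.\ Bernoulli$(1/2)$ variables, independent of $T_n$.

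For (i) I would write $\P\{d\mid S_n+u\}=\E\bigl[\P\{d\mid B+(T_n+u)\mid\text{config},T_n\}\bigr]$ and split according to $\{K_n\ge\theta n\}$. On that event, inserting the drifted Bernoulli estimate with $m=K_n$ and drift $w=T_n+u$ costs a conditional error $O\bigl((\log K_n)^{5/2}K_n^{-3/2}\bigr)\le C\theta^{-3/2}(\log n)^{5/2}n^{-3/2}$; on the complement, both the true probability and the conditional Gaussian term $\frac1d\sum_{\ell}e^{2i\pi\ell(T_n+u)/d}(e^{i\pi\ell/d-\pi^2\ell^2/(2d^2)})^{K_n}$ are $\le1$, and a Chernoff bound gives $\P\{K_n<\theta n\}<\rho^n$ as soon as the Cram\'er rate $\Lambda^*(\theta)=\theta\log(\theta/\t)+(1-\theta)\log\frac{1-\theta}{1-\t}$ exceeds $\log(1/\rho)$ --- attainable for some $\theta\in(0,\t)$ precisely because $\Lambda^*(0^+)=\log(1/\m)>\log(1/\rho)$, which is where $1-\t<\rho$ enters and determines $\theta=\theta(\rho,\t)$. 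Finally, set $Z_j=\widetilde X_j$ when $I_j=1$ and $Z_j=Y_j$ otherwise, so that $(T_n,K_n)=\sum_{j\le n}(Z_j,I_j)$ with i.i.d.\ summands; taking the expectation of the conditional Gaussian term and using this independence gives
\[
\frac1d\sum_{\ell=0}^{d-1}e^{2i\pi\ell u/d}\,\E\Bigl[e^{2i\pi\ell T_n/d}\bigl(e^{i\pi\ell/d-\pi^2\ell^2/(2d^2)}\bigr)^{K_n}\Bigr]=\frac1d\sum_{\ell=0}^{d-1}e^{2i\pi\ell u/d}\Bigl(\t\,e^{i\pi\ell/d-\pi^2\ell^2/(2d^2)}\,\E\,e^{2i\pi\ell\widetilde X/d}+s(\ell/d)\Bigr)^n,
\]
whose $\ell=0$ term is $\frac1d$; this is the principal term of (i) (the drift carried by the unimodular coefficients $e^{2i\pi\ell u/d}$), and adding the two error contributions yields the stated bound, the factor $2$ in $2\rho^n$ accounting for discarding both the probability and the principal term on $\{K_n<\theta n\}$.

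Part (ii) then follows by summing (i) over $n\ge N$: the term $\frac{C}{\theta^{3/2}}\frac{(\log n)^{5/2}}{n^{3/2}}$ sums to $\frac{C_2}{\theta^{3/2}}$ and $\sum_{n\ge N}2\rho^n\le\frac{2\rho^2}{1-\rho}$, while the contribution of the principal term, averaged over $d\in\mathcal D_\p$, is dominated using $\bigl|\t e^{i\pi\ell/d-\pi^2\ell^2/(2d^2)}\E e^{2i\pi\ell\widetilde X/d}+s(\ell/d)\bigr|\le\t e^{-\pi^2\ell^2/(2d^2)}+\m$, the geometric series $\sum_{n\ge N}(\t e^{-\pi^2\ell^2/(2d^2)}+\m)^n\le\bigl(\t(1-e^{-\pi^2\ell^2/(2d^2)})\bigr)^{-1}$ and the elementary inequality $1-e^{-x}\ge xe^{-x/2}$, the last of which fixes the shape of $C_1=2e^{\pi^2/4}/(1-e^{-\pi^2/16})$; the uniformity in $\p$ rests on the defining properties of the test set $\mathcal D$. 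The main obstacle is concentrated in (i): running the Fourier analysis behind Theorem~\ref{t1} uniformly in the drift, and then marrying it to the \emph{random} length $K_n$ of the extracted Bernoulli part --- in particular calibrating $\theta$ against $\rho$ via $1-\t<\rho$ and checking that $O\bigl((\log K_n)^{5/2}K_n^{-3/2}\bigr)$ survives the integration over the configuration.
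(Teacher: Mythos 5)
Your part (i) is, modulo presentation, the paper's own argument. The drift-uniform Bernoulli estimate you invoke is Theorem \ref{estPd.Bn.u}; your mixture decomposition of the law of $X$ as $\mu\nu_1+\vartheta\nu_2$ with $\nu_2$ the law of $\beta+\widetilde X$ is precisely the coupling $(V,\varepsilon,L)$ of Lemmas \ref{bpr} and \ref{lemd}; your Chernoff calibration of $\theta$ against $\rho$ via $1-\vartheta<\rho$ is Lemma \ref{lemk}; and the conditional insertion of the Bernoulli estimate with random length $K_n=B_n$ and drift $W_n+u$, followed by the factorization of the expected Gaussian term over the i.i.d.\ pairs, is exactly what Section \ref{s5} does. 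One remark: your principal term carries the unimodular factor $e^{2i\pi\ell u/d}$, whereas the statement of Theorem \ref{t2}(i) has no $u$ in the main term at all; your form is the correct one (the drift occurs once, so it cannot be absorbed into the $n$-th power), and a $u$-free main term cannot be right, since for $d\asymp\sqrt n$ the $u$-dependence of $\P\{d\,|\,S_n+u\}$ is of order $1/d$, far above the stated error term.

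Part (ii), however, has a genuine gap, located exactly at the step you do not carry out. Your direct geometric series (which replaces, and in fact slightly improves on, the paper's two combinatorial devices of modifying $(1-\vartheta)^{n-m}$ and resumming a negative binomial series) leaves you with $\frac1d\sum_{0<|\ell|<d}\frac{1}{\vartheta(1-e^{-\pi^2\ell^2/(2d^2)})}$ to be bounded uniformly in $d$. Your inequality $1-e^{-x}\ge xe^{-x/2}$ bounds each term by $\frac{2e^{\pi^2/4}}{\vartheta}\cdot\frac{d^2}{\pi^2\ell^2}$, and then $\frac1d\sum_{0<|\ell|<d}\frac{d^2}{\ell^2}\asymp d$: the small values of $\ell$ make the quantity grow linearly in $d$, and averaging over $\mathcal D_\varphi$ does not rescue this (for $\mathcal D_\varphi=\{2,\dots,\varphi\}$ the average is still $\asymp\varphi$). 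So no absolute constant $C_1$ emerges from this route. You should be aware that the paper's own proof breaks at the same point: its final inequality uses $1-e^{-(1-\varepsilon)\pi^2/(2d^2)}\ge 1-e^{-(1-\varepsilon)\pi^2/8}$, which holds only for $d=2$. Worse, the assertion itself is suspect: already for Bernoulli sums the $\ell=\pm1$ terms of the Theta function give $\sum_{d^2\le n\le 2d^2}\big|\P\{d\,|\,B_n\}-1/d\big|\ge c\,d$ for an absolute $c>0$, so the $n$-summed, $d$-averaged quantity is unbounded even for $\mathcal D=\{2,3,\dots\}$. Either an additional restriction on the test set or a weakening of the claim is needed; in any case your sketch does not close (ii).
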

The proof 
 is very delicate, the second assertion is proved by introducing two tricks, new in this context.\begin{remark} 
The used 
 characteristic $\t_X$ is tigthly related to the \lq\lq smoothness\rq\rq characteristic
\begin{eqnarray}\label{delta}  \d_X =\sum_{m\in \Z}\big|{\mathbb P}\{X=m\}-{\mathbb P}\{X=m-1\}\big|, 
\end{eqnarray}
since $
\d_X =2(1- \t_X)  $, as quoted in  Mukhin
 \cite{Mu}, p.\,700, noticing that $a+b-|a-b|= 2(a\wedge b)$, $a$ and $b$ non-negative. That  characteristic was  introduced and much investigated by Gamkrelidze in several original works, \cite{Gam1},    \cite{G2}, \cite{G1} notably, also in remarkable counter-examples.
     We thus have the equivalence 
 \begin{equation}\label{equiv}  \t_X>0\quad \Longleftrightarrow \quad \d_X<2.
\end{equation}  
An important  consequence is
 that  condition $\d_X<2$ implies that $X$ has a Bernoulli component. 
 This follows from  Lemma \ref{bpr}. The  two characteristics seem through formula \eqref{equiv}  equivalent. However 
 the second is used in Gamkrelidze's works in relation with the method of characteristic  functions, whereas the first is related to the Bernoulli part extraction, which is known to be characteristic function   free.
  
  \end{remark}

\begin{remark} \label{llt.transf.} For a random variable $X  $      taking values in a  lattice $\mathcal L(v_{
0},D)$,     the linear transformation
\begin{equation*} 
 X'_j= \frac{X_j-v_0}{D},
  \end{equation*}
allows one to reduce  to the case  $v_0=0$, $D=1$.
\end{remark}
\vskip 10 pt The paper is organized as follows. In the next Section we prove  a uniform version of Theorem \ref{t1} with drift. In Section \ref{s4} we collect the necessary auxiliary results.  Finally in Section \ref{s5} we give the proof of Theorem \ref{t2}.

\vskip 8 pt \noi {\it Notation.} Throughout    $C$ denotes an absolute constant whose value may change at each occurence.


\section{An intermediate result} \label{s3}


For the proof of the main result, an  extension of   Theorem \ref{t1} to value distribution of divisors of $B_{ n}+u$,   $u $ being any non-negative integer, is necessary. Interestingly enough, the  estimate we prove is   uniform over all $u\ge 0 $. We begin with a preliminary  observation. The restriction $d\le n$ in the estimate of Theorem \ref{t1} is superfluous. That  estimate is in turn also uniform over all $d\ge 2$.  
 If $d>n$, as the first term is 0,   \eqref{unif.est.theta} provides an   estimate  of   the central term ${\Theta(d,n)/ d} $, for $d>n$. 
 
This point   has some  degree of importance as it will be used in the course of the proof of  the main result.   The proof of the Theorem below  is   similar to the one of Theorem \ref{t1}, which corresponds to the case $u=0$,  see proof of  Theorem II in \cite{W1}. We will provide the necessary details to understand this point, notably. 
 \begin{theorem}\label{estPd.Bn.u} 
There exists two absolute constants $C$ and $n_0$ such that for all $n\ge n_0$,
\begin{equation*}
 \sup_{u\ge 0}\,\sup_{d\ge 2}\Big|\P\big\{  d|   B_{ n}+u \big\}-  {1\over d}\sum_{ 0\le |j|< d }
e^{i\pi (2u+n){j\over d}}\  e^{  -n  
{\pi^2j^2\over 2d^2}}\Big|\le C\, (\log n)^{5/2}n^{-3/2}.
\end{equation*}

\end{theorem}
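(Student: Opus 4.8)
The plan is to follow the scheme of the proof of Theorem II in \cite{W1} (the case $u=0$), tracking carefully how the shift $u$ enters the Fourier-analytic computation and verifying that the resulting bound is uniform in $u\ge 0$. The starting point is the standard representation of the divisibility probability by roots of unity: for $d\ge 2$,
\begin{equation*}
\P\big\{ d\,|\,B_n+u\big\}=\frac{1}{d}\sum_{j=0}^{d-1}\E\, e^{2i\pi \frac{j}{d}(B_n+u)}
=\frac{1}{d}\sum_{j=0}^{d-1} e^{2i\pi \frac{j}{d}u}\,\big(\varphi(j/d)\big)^n,
\end{equation*}
where $\varphi(t)=\E\,e^{2i\pi t\b_1}=\frac{1+e^{2i\pi t}}{2}$ is the characteristic function of a single Bernoulli variable. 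The term $j=0$ contributes $1/d$; by conjugation symmetry $j\leftrightarrow d-j$ one can fold the sum over $0<|j|<d$, which produces the real-looking central sum displayed in the statement once one writes $\varphi(j/d)=e^{i\pi j/d}\cos(\pi j/d)$ and hence $\varphi(j/d)^n=e^{i\pi n j/d}(\cos(\pi j/d))^n$. The prefactor $e^{2i\pi j u/d}$ combines with $e^{i\pi n j/d}$ to give exactly $e^{i\pi(2u+n)j/d}$, which is the phase appearing in the claimed main term.

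The core of the argument is therefore to replace $(\cos(\pi j/d))^n$ by the Gaussian $e^{-n\pi^2 j^2/(2d^2)}$ with an error that, after summation over $j$ and division by $d$, is $O\big((\log n)^{5/2}n^{-3/2}\big)$ uniformly in $d\ge 2$ and $u\ge 0$. This is done exactly as in \cite{W1}: one splits the range of $j$ according to whether $|j|/d$ is small (say $\lesssim (\log n)/\sqrt n$, where the Gaussian is the dominant approximation and a Taylor/Laplace estimate $\log\cos x = -x^2/2 + O(x^4)$ applies) or large (where both $|\cos(\pi j/d)|^n$ and $e^{-n\pi^2 j^2/(2d^2)}$ are individually negligible, being $\le e^{-cn/d^2}$ summed against $d$ terms, handled by comparison with a Gaussian integral as in the example computation in \S\ref{s1}). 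The crucial point for uniformity is that $|e^{i\pi(2u+n)j/d}|=1$, so the shift $u$ only affects phases and never the moduli that drive all the estimates; thus the bound obtained is literally independent of $u$, and the $\sup_{u\ge0}$ costs nothing. The case $d>n$ is covered automatically: the left-hand probability term can only be nonzero trivially, and the argument still bounds the central sum, as noted before the statement.

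I expect the main obstacle to be purely bookkeeping rather than conceptual: one must carry the phase $e^{i\pi(2u+n)j/d}$ through each of the sub-range estimates and check that at no point does the proof in \cite{W1} secretly use $u=0$ (for instance when symmetrizing $j\leftrightarrow d-j$, one should verify $e^{i\pi(2u+n)(d-j)/d}$ pairs correctly with $e^{i\pi(2u+n)j/d}$ — this works because $e^{i\pi(2u+n)}=(-1)^{2u+n}$ is real, and the sign is absorbed consistently into the $(\cos)^n$ and Gaussian terms which satisfy the same reflection identity). The delicate technical estimate — the one genuinely responsible for the $(\log n)^{5/2}n^{-3/2}$ rate — is the intermediate-range bound on $\big|(\cos(\pi j/d))^n - e^{-n\pi^2 j^2/(2d^2)}\big|$ summed over $j$ with $|j|\le d$, and this is taken verbatim from the proof of Theorem \ref{t1} with no change, since it involves only moduli. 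Hence the whole proof reduces to \cite{W1} plus the observation that inserting unimodular phases is harmless.
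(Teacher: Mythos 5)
Your proposal is correct and follows essentially the same route as the paper: the roots-of-unity representation $\P\{d\,|\,B_n+u\}=\frac{1}{d}\sum_{j=0}^{d-1}e^{i\pi(2u+n)j/d}(\cos\frac{\pi j}{d})^n$, the symmetrization $j\leftrightarrow d-j$ (the paper's Lemma \ref{reduction.sym}), the split into the sectors $A_n$, $A'_n$ with the Taylor estimate $\log\cos x=-x^2/2+\mathcal O(x^4)$ on the small range and exponential negligibility on the large range, and the observation that the phases are unimodular so all bounds are independent of $u$. Your note that the reflection works because $(-1)^{2u+n+n}=1$ so the paired phases combine into $\cos(\pi(2u+n)j/d)$ matches the paper's formula \eqref{formula.sym} exactly.
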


Put 
 \begin{equation}\label{theta.u.}
\Theta_u(d,n)  =  \sum_{\ell\in \Z}  e^{i\pi (2u+n){j\over d}}\  e^{  -n  
{\pi^2j^2\over 2d^2}}.
 \end{equation}
 Note that $\Theta_0(d,n)=\Theta(d,n)$.
As a corollary we get,
\begin{corollary}\label{cor.estPd.Bn.u} 
For some absolute constant $C$, we have  
\begin{equation*} 
\sup_{u\ge 0}\, \sup_{d\ge 2}\Big|\P\big\{  d|   B_{ n}+u \big\}-{\Theta_u(d,n)\over d} \Big| \le C \,(\log
n)^{5/2}n^{-3/2} ,
\end{equation*}
for all $n\ge n_0$.
\end{corollary}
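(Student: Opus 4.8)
The plan is to deduce the corollary directly from Theorem~\ref{estPd.Bn.u} by a Gaussian tail estimate. By the definition \eqref{theta.u.}, the only difference between $\Theta_u(d,n)$ and the truncated exponential sum appearing in Theorem~\ref{estPd.Bn.u} is
\begin{equation*}
 R_u(d,n):=\sum_{|\ell|\ge d} e^{i\pi (2u+n){\ell\over d}}\,e^{-n\pi^2\ell^2/(2d^2)},
\end{equation*}
so it suffices to bound ${1\over d}|R_u(d,n)|$, uniformly in $u\ge 0$ and $d\ge 2$, by a quantity that is $o\big((\log n)^{5/2}n^{-3/2}\big)$; the corollary then follows from the triangle inequality (enlarging $n_0$ if needed). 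Since the phase factor has modulus $1$ and the remaining factor is even in $\ell$,
\begin{equation*}
 {1\over d}\,|R_u(d,n)|\ \le\ {2\over d}\sum_{\ell\ge d} e^{-x\ell^2},\qquad x:={n\pi^2\over 2d^2}.
\end{equation*}

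To control this tail uniformly in $d$ I would use $\ell^2\ge d^2+2d(\ell-d)$ for $\ell\ge d$, which produces a geometric majorant:
\begin{equation*}
 \sum_{\ell\ge d}e^{-x\ell^2}\ \le\ e^{-xd^2}\sum_{k\ge 0}e^{-2xdk}\ =\ \frac{e^{-n\pi^2/2}}{1-e^{-n\pi^2/d}},
\end{equation*}
using $xd^2=n\pi^2/2$ and $2xd=n\pi^2/d$. When $2\le d\le n$ one has $n\pi^2/d\ge\pi^2$, so the denominator exceeds $1-e^{-\pi^2}$; when $d>n$ one has $0<n\pi^2/d<\pi^2$, and by concavity of $t\mapsto 1-e^{-t}$ on $[0,\pi^2]$ the denominator exceeds $\frac{1-e^{-\pi^2}}{\pi^2}\cdot\frac{n\pi^2}{d}=\frac{(1-e^{-\pi^2})n}{d}$, the factor $d$ cancelling the prefactor ${1\over d}$. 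In either case, using $d\ge 2$ in the first range and $n\ge 1$ in the second, one obtains ${1\over d}|R_u(d,n)|\le C\,e^{-n\pi^2/2}$ with $C$ absolute. Combining this with Theorem~\ref{estPd.Bn.u} and observing that $e^{-n\pi^2/2}$ is exponentially smaller than $(\log n)^{5/2}n^{-3/2}$ for $n\ge n_0$, the term $C\,e^{-n\pi^2/2}$ is absorbed into $C\,(\log n)^{5/2}n^{-3/2}$ at the cost of enlarging the absolute constant, which yields the corollary.

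There is no real obstacle here: the entire analytic content sits in Theorem~\ref{estPd.Bn.u}, and the corollary is only a cosmetic reformulation replacing the partial $\Theta$-sum by the full one. The only point that needs a little care is that the tail estimate must hold uniformly over all $d\ge 2$ — in particular for $d$ much larger than $n$, where the geometric ratio $e^{-n\pi^2/d}$ is close to $1$ — and this is exactly what the case split $d\le n$ versus $d>n$ above handles.
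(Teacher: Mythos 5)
Your proof is correct and follows essentially the same route as the paper: both deduce the corollary from Theorem~\ref{estPd.Bn.u} by the triangle inequality together with a uniform-in-$d$ bound on the tail of the theta series, the only difference being that you majorize the tail by a geometric series (via $\ell^2\ge d^2+2d(\ell-d)$) where the paper compares it with a Gaussian integral. Your case split $d\le n$ versus $d>n$ correctly secures uniformity over all $d\ge 2$, and your resulting bound $C\,e^{-n\pi^2/2}$ is in fact slightly sharper than the paper's $C\,e^{-\pi^2 n/72}$.
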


\begin{proof}[Proof of Theorem \ref{estPd.Bn.u}]    Let  $d,n,u$ be arbitrary non-negative integers. As
\begin{equation}
d\d_{d |B_{n }+u}=\sum_{j=0}^{d-1} e^{2i\pi  {j \over d}(B_{n }+u)},
\end{equation}
 we obtain  after integration
 \begin{eqnarray}\label{basic}
 \P\big\{ d | B_n+u\big\}&=&{1\over d}\sum_{j=0}^{d-1} e^{2i\pi  {j \over d}u}\E e^{2i\pi  {j \over d} B_{n }}\,=\, {1\over d}\sum_{j=0}^{d-1} e^{2i\pi  {j \over d}u}  \Big(\frac{e^{2i\pi   {j \over d} }+1}{2}\Big)^n
 \cr&=&{1\over d}\sum_{j=0}^{d-1} e^{i\pi (2u+n){j\over d}}\big(\cos { \pi j\over d}\big)^{n}
 .
\end{eqnarray}
 
 We first operate a reduction due to symmetries. The following   Lemma is formula (2.3) in  \cite{W1}, which is only stated. As we shall see, the role of these symmetry properties is important and we have  included a detailed proof.

 \begin{lemma}\label{reduction.sym} For any integers $d\ge 2$, $n\ge 2$ and $u\ge 0$, 
 \begin{eqnarray*}
  \P\big\{ d| B_n+u\big\}&=& {1\over d} + {2\over d}\sum_{1\le j< d/2} \cos\big(\pi (2u+n){j\over d} \big)\big(\cos { \pi j\over d}\big)^{n}
.
\end{eqnarray*}
 \end{lemma}
 
 \begin{proof} Let $d=2\d$  with $\d \ge 1$. Then 
 \begin{equation}\label{basica}
 \P\big\{ 2\d | B_n+u\big\} \,=\, {1\over 2\d}\sum_{j=0}^{\d-1} e^{i\pi (2u+n){j\over 2\d}}\big(\cos { \pi j\over 2\d}\big)^{n}
  +{1\over 2\d}\sum_{j=\d}^{2\d-1} e^{i\pi (2u+n){j\over 2\d}}\big(\cos { \pi j\over 2\d}\big)^{n}
\end{equation}
 
Letting first $j=2\d-v$, $v=1,\ldots, \d$ in the last sum, we have  
 \begin{eqnarray*}\sum_{j=\d}^{2\d-1} e^{i\pi (2u+n){j\over 2\d}}\big(\cos { \pi j\over 2\d}\big)^{n}
&=&\sum_{v=1}^\d  e^{i\pi (2u+n){2\d-v\over 2\d}}\big(\cos { \pi(2\d-v)\over 2\d}\big)^{n}
\cr &=&\sum_{v=1}^\d  e^{i  (2u+n)(\pi -{\pi v\over 2\d})}\big(\cos (\pi -{\pi v\over 2\d})\big)^{n}. 
\end{eqnarray*}
But 
\begin{equation}\label{formula.sym}e^{i  m(\pi-x)} \cos^n    (\pi-x)  =(-1)^me^{-i 
mx}  (-1)^n\cos^n   x= (-1)^{m+n}e^{-i 
nx}  \cos^n   x  .
\end{equation} Thus
\begin{eqnarray*}\sum_{j=\d}^{2\d-1} e^{i\pi (2u+n){j\over 2\d}}\big(\cos { \pi j\over 2\d}\big)^{n}
  &=&\sum_{v=1}^\d  e^{i  (2u+n)(\pi -{\pi v\over 2\d})}\big(\cos (\pi -{\pi v\over 2\d})\big)^{n}
  \cr &=&
  \sum_{v=1}^\d  e^{-i  (2u+n) {\pi v\over 2\d}}\big(\cos {\pi v\over 2\d}\big)^{n}
  \cr &=&
  \sum_{v=1}^{\d -1} e^{-i  (2u+n) {\pi v\over 2\d}}\big(\cos {\pi v\over 2\d}\big)^{n}, 
\end{eqnarray*}
since for $v=\d$, we have $e^{-i  (2u+n) {\pi v\over 2\d}}\big(\cos {\pi v\over 2\d}\big)^{n}=e^{-i  (2u+n) {\pi  \over 2 }}\big(\cos {\pi  \over 2 }\big)^{n}=0$.
\vskip 3 pt 
Carrying   this back to \eqref{basica} gives,
 \begin{eqnarray}\label{basicb}
& & \P\big\{ 2\d | B_n+u\big\} \,=\, {1\over 2\d}\sum_{j=0}^{\d-1} e^{i\pi (2u+n){j\over 2\d}}\big(\cos { \pi j\over 2\d}\big)^{n}
 \cr & &\quad+{1\over 2\d} \sum_{j=1}^{\d-1}  e^{-i  (2u+n) {\pi j\over 2\d}}\big(\cos {\pi j\over 2\d}\big)^{n}
 \cr &=&{1\over 2\d}+ {1\over 2\d} \sum_{j=1}^{\d-1} \Big\{e^{i\pi (2u+n){j\over 2\d}}+e^{-i  (2u+n) {\pi j\over 2\d}}\Big\}\big(\cos {\pi j\over 2\d}\big)^{n}
 \cr &=&{1\over 2\d}+{2\over 2\d}\sum_{j=1}^{\d-1} \cos \big(\pi (2u+n){j\over 2\d}\big)\big(\cos { \pi j\over 2\d}\big)^{n}.
\end{eqnarray}

Now let  $d=2\d+1$ with $\d \ge 1$ and write that
 \begin{eqnarray}\label{basicc}
 \P\big\{ 2\d+1 | B_n+u\big\}&=& {1\over 2\d+1}\sum_{j=0}^{\d} e^{i\pi (2u+n){j\over 2\d +1}}\big(\cos { \pi j\over 2\d +1}\big)^{n}
\cr & &\quad +{1\over 2\d+1}\sum_{j=\d+1}^{2\d} e^{i\pi (2u+n){j\over 2\d +1}}\big(\cos { \pi j\over 2\d +1}\big)^{n}.
\end{eqnarray} 

With the variable change $j=2\d +1-v$,  $v=1, \ldots \d$,    the second sum writes using \eqref{formula.sym},
\begin{eqnarray}\label{basicd}
  \sum_{j=\d}^{2\d} e^{i\pi (2u+n){j\over 2\d +1}}\Big(\cos { \pi j\over 2\d}\Big)^{n}
&=&  \sum_{v=1}^{\d} e^{i\pi (2u+n){(2\d +1-v)\over 2\d +1}}\Big(\cos { \pi (2\d +1-v)\over 2\d+1}\Big)^{n}
\cr &=&    \sum_{v=1}^{\d} e^{-i\pi (2u+n){v\over 2\d +1}}\big(\cos { \pi v\over 2\d+1}\big)^{n}.
\end{eqnarray} 
Thus 
\begin{eqnarray}\label{basicc1}
& & \P\big\{ 2\d +1| B_n+u\big\}\,=\, {1\over 2\d+1}\sum_{j=0}^{\d} e^{i\pi (2u+n){j\over 2\d +1}}\big(\cos { \pi j\over 2\d +1}\big)^{n}
\cr & &\quad +{1\over 2\d+1}\sum_{j=1}^{\d} e^{-i\pi (2u+n){j\over 2\d +1}}\big(\cos { \pi j\over 2\d+1}\big)^{n}
\cr &=& {1\over 2\d+1} + {1\over 2\d+1}\sum_{j=1}^{\d} \Big\{ e^{i\pi (2u+n){j\over 2\d +1}}+e^{-i\pi (2u+n){j\over 2\d +1}}\Big\}\big(\cos { \pi j\over 2\d +1}\big)^{n}
\cr &=& {1\over 2\d+1} + {2\over 2\d+1}\sum_{j=1}^{\d} \cos\big(\pi (2u+n){j\over 2\d +1} \big)\big(\cos { \pi j\over 2\d +1}\big)^{n}
.
\end{eqnarray} 
From \eqref{basicb} and \eqref{basicc1} follows that
\begin{eqnarray}\label{basicd1}
  \P\big\{ d| B_n+u\big\}&=& {1\over d} + {2\over d}\sum_{1\le j< d/2} \cos\big(\pi (2u+n){j\over d} \big)\big(\cos { \pi j\over d}\big)^{n}
.
\end{eqnarray} 
 \end{proof}
Thanks to the reduction operated, we can work in the first quadrant, instead of the half-circle, which will   permit us  later,   to get  in some particular remarkable     cases of divisors of $n$ close to $\sqrt n$,  strong  improvements of the general estimate we are now going to prove. 

\vskip 3 pt Let 
 $\a>\a'>0$. Let
 \begin{equation}\label{phi.tau}\p_n=   \big( {2\a\log n \over n}\big)^{1/2},
\qquad\qquad \tau_n= {\sin\p_n/2\over
\p_n /2}.
\end{equation} 
We assume
$n$ sufficiently large,   $n\ge
n(\a, \a')$ say, so that 
\begin{equation} \label{ntaun}
  \tau_n\ge (\a^\prime/\a)^{1/2},\qq\quad (\forall n\ge n(\a, \a')).
\end{equation}  

Consider two sectors
  $$A_n = ]0,\p_n[,  \qq\qq A'_n=[ \p_n, {\pi\over 2} [.$$ 
\noindent   If ${\pi j\over d}\in A'_n$, then $|\cos {\pi j\over d}|\le \cos \p_n$. And 
 $|\cos {\pi j\over d}|^n \le \big(\cos \p_n\big)^n  \le e^{-2n \sin^2(\p_n/2)} $. 
As $ 2n \sin^2(\p_n/2) = 2n (\p_n/2)^2\tau_n^2\ge \a^\prime\log n$,  
  we deduce 
\begin{equation}\label{(2.5)} \sum_{ 1\le j  <d/2\ :\ { \pi j\over d}\in A'_n}\big|\cos { \pi j\over d}\big|^{n }\le
d\,n^{-\a^\prime }/2.
\end{equation}
 \vskip 5 pt
 We also have  if ${\pi j\over d}\in A'_n$,  that ${\pi \over 2}>{\pi j\over d}\ge \p_n=  \big( {2\a\log n
\over n}\big)^{1/2}
$. Thus  
\begin{equation}\label{(2.10)} \sum_{ 1\le j  <d/2:\, { \pi j\over d}\in A'_n} e^{  -n   {\pi^2j^2\over 2d^2}}\le
dn^{- \a  }/2,
\end{equation}
which  complements \eqref{(2.5)}.
\vskip 5 pt Now assume  $\a>\a'>3/2$.   
Consider the contribution of the terms for which ${\pi j\over d}\in A_n$. Let 
$$D= \sum_{ 1\le j  <d/2 \, :\, { \pi j\over d}\in A_n}\cos \big(\pi (2u+n){j\over d}\big) \Big(\cos^{n } { \pi j\over d}  - e^{ -n   {\pi^2j^2\over
2d^2}} \Big). $$ 
By using the elementary inequality:  
$|e^u-e^v|\le |u-v| $ for $u,v\le 0$ we get 
$$|D|\le n \sum_{ 1\le j  <d/2 \, :\, { \pi j\over d}\in  A_n} \big|\log \cos  { \pi j\over d}  +      {\pi^2j^2\over 2d^2} \big|. $$
Since
$\log(1-2\sin^2(x/2))= -x^2/2 +{\mathcal O}(x^4)$ near
$0$  and $A_n=]0,\p_n[$, we deduce  
\begin{eqnarray}\label{(2.11)} |D|\,\le\, n \sum_{ 1\le j  <d/2 \, :\, { \pi j\over d}\in  ]0,\p_n[} \big|\log \cos  { \pi j\over d}  +      {\pi^2j^2\over 2d^2} \big|&\le &Cn\sum_{ 1\le j  <d/2\, :\, { \pi j\over d}
\in ]0,\p_n[}  ({j\over
d})^4 
\cr &\le &    {Cn\over
d^4}  \sum_{     j   \le  {d\over \pi} ({2\a\log n \over n} )^{1/2}}   j^4
 \cr & \le & C_\a d(\log n)^{5/2}n^{-3/2} .
 \end{eqnarray}

Combining \eqref{(2.5)}, \eqref{(2.10)} and \eqref{(2.11)} shows that
\begin{eqnarray}
& &\Big|\sum_{ 1\le j  <d /2 }\cos \big(\pi (2u+n){j\over d}\big)\ \Big\{ \cos^{n } { \pi j\over d} - e^{ -n   {\pi^2j^2\over
2d^2}}\Big\} \Big|
\cr &\le& \Big|\sum_{ 1\le j  <d /2 \, :\, { \pi j\over d}\in A'_n} \cos \big(\pi (2u+n){j\over d}\big)\ \Big\{ \cos^{n } { \pi j\over d}  -e^{ -n   {\pi^2j^2\over
2d^2}}\Big\} \Big|
\cr & &\qq + \Big|\sum_{ 1\le j  <d /2 \, :\, { \pi j\over d}\in A_n} \cos \big(\pi (2u+n){j\over d}\big)\ \Big\{ \cos^{n } { \pi j\over d} - e^{ -n   {\pi^2j^2\over
2d^2}}\Big\} \Big|
\cr &\le&  dn^{-\a^\prime }/2+ dn^{- \a  }/2+C_\a d(\log n)^{5/2}n^{-3/2}
\cr   &\le  &  C_\a d(\log
n)^{5/2}n^{-3/2}.
\cr   &   & \end{eqnarray}
Dividing both sides by $d$, and reporting next the obtained estimate into \eqref{basic}   gives in view of Lemma \ref{reduction.sym}
\begin{equation}\label{(2.13)} \Big|\P\big\{ d | B_n+u\big\}-{1\over d}- {1\over d}\sum_{ 1\le |j|< d/2 }
e^{i\pi (2u+n){j\over d}}\  e^{  -n  
{\pi^2j^2\over 2d^2}}\Big|\le C_\a  (\log
n)^{5/2}n^{-3/2}.
\end{equation}

Noticing now that   no condition on $d\ge 2$ is made, and that  all constants involved in the above calculations are independent from $u$, 
we obtain  by giving a value $>3/2$ to $\a$, $\a'$ and noting $n_0= n(\a,\a')$, $C=C_\a$,   that for all $n\ge n_0$,
\begin{equation}\label{(2.14)}
\sup_{u\ge 0}\,  \sup_{d\ge 2}\Big|\P\big\{  d|   B_{ n}+u \big\}-  {1\over d}\sum_{ 0\le |j|< d }
e^{i\pi (2u+n){j\over d}}\  e^{  -n  
{\pi^2j^2\over 2d^2}}\Big|\le \, C\, (\log n)^{5/2}n^{-3/2}.
\end{equation}
This  establishes Theorem \ref{estPd.Bn.u}.
 \end{proof}

\begin{remark}The proof can be summarized as follows, if 
\begin{equation}
A=  \sum_{ 1\le j  <d/2\atop{ \pi j\over d}\in A_n}  \big|\cos^{n } { \pi j\over d}  - e^{ -n   {\pi^2j^2\over
2d^2}}  \big|, \qq A'= \sum_{ 1\le j  <d/2\atop{ \pi j\over d}\in A'_n}  \big(|\cos  { \pi j\over d}|^n + e^{ -n   {\pi^2j^2\over
2d^2}}  \big) 
 \end{equation}
 then for all $n\ge n_0$ and $d\ge 2$, $A+A'\le    C  \,d\,(\log n)^{5/2}n^{-3/2}$.  
  The  {\it cosine} parts represent $\P\{d|B_n +u\}$, and the {\it exponential } parts the corresponding partial sums of $\Theta_u(d,n)$. 
 \end{remark}

 \begin{remark}
The proof given  is transposable to 
  other systems of independent random variables when such symmetries exist. This is not the case for  the   Hwang-Tsai model of the Dickman function, neither for  the Cram\'er model of primes for instance, see \cite{W2}. 
\end{remark}
\begin{proof}[Proof of Corollary \ref{cor.estPd.Bn.u}]
Recall that by \eqref{theta.u.},
$$\Theta_u(d,n)  =  \sum_{\ell\in \Z}  e^{i\pi (2u+n){j\over d}}\  e^{  -n  
{\pi^2j^2\over 2d^2}}.
$$
Consider the remainder $r:=\sum_{j\ge d/2} e^{-n{\pi^2j^2\over 2d^2}}$. By Theorem \ref{estPd.Bn.u} and using the
triangle inequality,  
\begin{equation}\label{(2.15)}\Big|\P\big\{  d|   B_{ n}+u \big\}-{1\over d}\sum_{ \ell\in\Z}
e^{i\pi (2u+n){\ell\over d}}\  e^{  -n  
{\pi^2\ell^2\over 2d^2}}\Big| \le C \,(\log
n)^{5/2}n^{-3/2}+ {2r\over d}.
\end{equation}
We prove that for all integers $d\ge 2$ and $n\ge 2$,
\begin{eqnarray}\label{estimate.r} r \,\le\,   C\, e^{- {\pi^2n \over 72}}.
\end{eqnarray} 
\noi --- If $d=2$, 
\begin{eqnarray*}r\ =\  \sum_{j=1}^\infty e^{-n{\pi^2j^2\over 8}}
&\le  & e^{- {\pi^2n \over 8}}+ \sum_{j=2}^\infty \int_{{\pi (j-1)
 \over  2\sqrt 2}}^{{\pi j \over 2\sqrt 2}}e^{-   n x^2 }dx\ = \ e^{- {\pi^2n \over 8}}+   \int_{{\pi 
 \over  2\sqrt 2}}^\infty e^{-   n x^2 }dx
 \cr (x={y\over  \sqrt{ 2n}})\quad &  =&  e^{- {\pi^2n \over 8}}+  
 \int_{{\pi\sqrt n  
 \over  2 }}^\infty e^{-   y^2/2 }{dy\over\sqrt{ 2n}}\ \le \ Ce^{- {\pi^2n \over 8}}.
 \end{eqnarray*}
 
\smallskip\par
\noi --- If $d\ge 3$, then 
$${{d\over 2}-1\over d}\ge {{d\over 2}-{d\over 3}\over d}={1\over 6}. $$
Therefore
\begin{eqnarray}\label{estimate.r.} r  \ \le \  \sum_{j\ge d/2}^\infty \int_{{\pi (j-1)
\over   \sqrt 2 d}}^{{\pi j \over  \sqrt 2d}}e^{-   n x^2 }dx&\le &  \int_{{\pi({d\over 2}-1) 
\over   \sqrt 2 d}}^\infty e^{-   n x^2 }dx
\cr(x={y\over  \sqrt{ 2n}})\quad  & \le  &  \int_{{\pi  
\over  6 \sqrt 2 }}^\infty e^{-   n x^2 }dx    \int_{{\pi \sqrt n 
\over  6   }}^\infty  e^{-   y^2/2 }{dy\over\sqrt{ 2n}}\ \le \ Ce^{- {\pi^2n \over 72}},
\end{eqnarray} 
a bound which is thus valid for all integers $d\ge 2$ and $n\ge 2$.  
\vskip 2 pt 
Incorporating now    these
estimates into \eqref{(2.15)}, gives
\begin{equation}\label{(2.15a)}\Big|\P\big\{  d|   B_{ n}+u \big\}-{\Theta_u(d,n)\over d}\, \Big| \le C \,(\log
n)^{5/2}n^{-3/2} ,
\end{equation}
for $n\ge n_0$. The latter estimate being uniform in $u\ge 0$ and $d\ge 2$, in view of Theorem \ref{estPd.Bn.u} and \eqref{estimate.r}, this achieves the proof. 
\end{proof}

\vskip 3pt

\begin{corollary}[Special cases]\label{special.cases}{\rm (i)} For each $\a\!>\!\a'\!>\!0$ and $n$ such that $  \tau_n\ge (\a^\prime/\a)^{1/2}$, where $\tau_n$ is defined in \eqref{phi.tau}, we have 
\begin{equation*}\label{ntaun.est.alpha} \sup_{u\ge 0}\,\sup_{d<  \pi   \sqrt{   n \over 2\a\log n}}\Big|\P\big\{  d|   B_{ n} +u\big\}-{1\over d} 
\Big|\,\le\, n^{-\a'}.
\end{equation*}
{\rm (ii)}
Let $0<\rho<1 $.  Let also $0<\e<1$, and suppose $n$ sufficiently large so that $\widetilde\tau_n\ge \sqrt{1-\e}$, where
$$  \widetilde\tau_n= {\sin\psi_n/2\over
\psi_n /2}\qq \qq \psi_n= \big({2n^\rho \over n}\big)^{1/2}.$$ 
Then,
\begin{equation*}  \sup_{u\ge 0}\,\sup_{d<  (\pi/\sqrt 2) n^{(1-\rho)/2} }\Big|\P\big\{  d|   B_{ n} +u\big\}-{1\over d} 
\Big|\,\le\, e^{-(1-\e) n^\rho}.
\end{equation*}
  \end{corollary}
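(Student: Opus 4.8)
\emph{Proof plan.} The plan is to read off both estimates directly from the symmetrized identity of Lemma~\ref{reduction.sym}, without passing through Theorem~\ref{estPd.Bn.u}; they are the drifted analogues of \eqref{unif.est.theta.alpha}--\eqref{unif.est.theta.rho}. Starting from
\[
\P\big\{d\,|\,B_n+u\big\}=\frac1d+\frac2d\sum_{1\le j<d/2}\cos\!\big(\pi(2u+n)\tfrac jd\big)\big(\cos\tfrac{\pi j}{d}\big)^n,
\]
I would bound the drift-carrying cosine factor by $1$ in modulus, which gives, \emph{uniformly in $u\ge 0$},
\[
\Big|\P\big\{d\,|\,B_n+u\big\}-\frac1d\Big|\le\frac2d\sum_{1\le j<d/2}\big|\cos\tfrac{\pi j}{d}\big|^n .
\]
This is the only point at which the drift $u$ enters, and it is exactly why the resulting bounds come out uniform in $u$. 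It then remains to control the right-hand side under the two hypotheses on $d$.

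For part~(i): when $d<\pi\sqrt{n/(2\alpha\log n)}=\pi/\varphi_n$, with $\varphi_n$ as in \eqref{phi.tau}, every $j\ge 1$ satisfies $\pi j/d\ge \pi/d>\varphi_n$, so all the frequencies $\pi j/d$ with $1\le j<d/2$ lie in the far sector $A'_n=[\varphi_n,\pi/2)$. Reusing the computation behind \eqref{(2.5)}---namely $|\cos(\pi j/d)|\le\cos\varphi_n$ and $(\cos\varphi_n)^n\le e^{-2n\sin^2(\varphi_n/2)}=e^{-2n(\varphi_n/2)^2\tau_n^2}=n^{-\alpha\tau_n^2}$---the hypothesis $\tau_n\ge(\alpha'/\alpha)^{1/2}$ yields $|\cos(\pi j/d)|^n\le n^{-\alpha'}$ for each such $j$. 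Since there are at most $d/2$ terms, the sum is $\le(d/2)\,n^{-\alpha'}$, and multiplying by $2/d$ gives the bound $n^{-\alpha'}$.

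For part~(ii): the argument is identical with $\psi_n$ in place of $\varphi_n$ and $\widetilde\tau_n$ in place of $\tau_n$. The condition $d<(\pi/\sqrt2)\,n^{(1-\rho)/2}$ is equivalent to $\pi/d>\psi_n$, so again every frequency $\pi j/d$ exceeds $\psi_n$; then, using $2n(\psi_n/2)^2=n^\rho$,
\[
\big|\cos\tfrac{\pi j}{d}\big|^n\le e^{-2n(\psi_n/2)^2\widetilde\tau_n^2}=e^{-\widetilde\tau_n^2 n^\rho}\le e^{-(1-\varepsilon)n^\rho}
\]
by $\widetilde\tau_n\ge\sqrt{1-\varepsilon}$, and summing at most $d/2$ such terms and dividing by $d/2$ produces the stated bound.

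I do not anticipate a genuine obstacle here: the corollary is essentially a bookkeeping consequence of the sector estimate \eqref{(2.5)} already obtained inside the proof of Theorem~\ref{estPd.Bn.u}. The only two points requiring a little care are (a) observing that the drift contributes only a cosine factor of modulus $\le1$, so that the estimates are genuinely uniform over $u\ge0$; and (b) checking that the thresholds $\pi/\varphi_n$ and $(\pi/\sqrt2)\,n^{(1-\rho)/2}$ on $d$ are precisely those that push the smallest frequency $\pi/d$---hence all of them---past $\varphi_n$, resp.\ $\psi_n$, into the range where the single-term exponential bound suffices (so, in contrast to the proof of Theorem~\ref{estPd.Bn.u}, no multi-term $|D|$-type estimate and no constraint $\alpha'>3/2$ are needed).
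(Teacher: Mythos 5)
Your proposal is correct and follows essentially the same route as the paper: both reduce to the symmetrized identity of Lemma~\ref{reduction.sym}, bound the drift cosine by $1$, observe that the hypothesis on $d$ forces every frequency $\pi j/d$ into the far sector $A'_n$ (resp.\ $\widetilde A'_n$), and then apply the single-term exponential bound from \eqref{(2.5)} and its $\psi_n$-analogue. Your closing observations (uniformity in $u$ via the cosine factor, the emptiness of the near sector, no need for the $|D|$ estimate or $\alpha'>3/2$) match the paper's argument exactly.
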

\begin{proof} (i) 
If $d<  \pi   \sqrt{   n \over 2\a\log n} $, then
$${ \pi j\over d}> { \pi  \over \pi   \sqrt{   n \over 2\a\log n}}= \sqrt{   2\a\log n \over   n}=\p_n,$$
and so  $\{ 1\le j  <d/2:{ \pi j\over d}\in A_n\}=\emptyset$. 
 \vskip 3 pt In view of Lemma \ref{reduction.sym}, \eqref{(2.5)}, we get: 
  {\it For each $\a\!>\!\a'\!>\!0$ and $n$ such that $  \tau_n\ge (\a^\prime/\a)^{1/2}$,
   we have} 
\begin{equation}\label{ntaun.est.alpha.} \sup_{u\ge 0}\,\sup_{d<  \pi   \sqrt{   n \over 2\a\log n}}\Big|\P\big\{  d|   B_{ n}+u \big\}-{1\over d} 
\Big|\,\le\, n^{-\a'}.
\end{equation}

(ii) Now, let $0<\rho<1 $. 
Consider the modified sectors
  $$\widetilde A_n = ]0,\psi_n[,  \qq\qq \widetilde A'_n=[ \psi_n, {\pi\over 2} [.$$ 
where  
$$\psi_n= \big({2n^\rho \over n}\big)^{1/2}  \qquad\qquad \widetilde\tau_n= {\sin\psi_n/2\over
\psi_n /2}.$$

Let also $0<\e<1$, and suppose $n$ sufficiently large for $\widetilde\tau_n$ to be greater than $\sqrt{1-\e}$.    Exactly as before, if
$ {\pi j\over d}\in
\widetilde A'_n$, then
$|\cos {\pi j\over d}|\le
\cos
\psi_n$, so that
 $|\cos {\pi j\over d}|^n \le  (\cos \psi_n )^n  \le e^{-2n \sin^2(\psi_n/2)} $. 
And $ 2n \sin^2(\psi_n/2) = 2n (\psi_n/2)^2\tau_n^2=     n^\rho    \tau_n^2\ge (1-\e) n^\rho $. We deduce  
 $$ \sum_{ 1\le j  <d/2\ :\ { \pi j\over d}\in \widetilde A'_n}\big|\cos { \pi j\over d}\big|^{n }\le
de^{-(1-\e) n^\rho}/2.$$
Since  $\psi_n=\big({2n^\rho \over n}\big)^{1/2}
\le {\pi j\over d}<{\pi \over 2}$,  we further get
\begin{equation}\label{(2.10).rho} \sum_{ 1\le j  <d/2\ :\ { \pi j\over d}\in A'_n} e^{  -n   {\pi^2j^2\over 2d^2}}\le
de^{-(1-\e) n^\rho}/2.\end{equation}

For the same reasons as before, 
 if    $d<  \pi   \sqrt{   n \over 2  n^\rho}
$, then    $\{ 1\le j  <d/2:{ \pi j\over d}\in   A_n\}=\emptyset$. 
We obtain in a similar fashion to (1),
$$  \sup_{u\ge 0}\,\sup_{d<  (\pi/\sqrt 2) n^{(1-\rho)/2} }\big|\P\big\{  d|   B_{ n} +u\big\}-{1\over d} 
\big|\le e^{-(1-\e) n^\rho}.$$
\end{proof}


\section{Auxiliary Results}   \label{s4}


Let     $\mathcal L(v_0,D)=\big\{v_k=v_0+Dk,k\in \Z\big\}$, where $v_0 $ and $D>0$ are some reals. Let $X$ be
  a random variable   such that  $\P\{X
\in\mathcal L(v_0,D)\}=1$. 
 Put
$$ f(k)= \P\{X= v_k\}, \qq k\in \Z .$$
Let 
\begin{eqnarray*}
 \t_X =\sum_{k\in \Z}f(k)\wedge f(k+1).\end{eqnarray*}
 \vskip 2 pt 
 We assumed in   \eqref{varthetaX} that 
 \begin{equation*}  \t_X>0.
  \end{equation*}
If the span $D$ is not maximal this may be  not satisfied. Note that $\t_X<1$. 
Let $0<\t\le\t_X$. One can associate to $\t$ and $X$  a
sequence $  \{ \tau_k, k\in \Z\}$     of   non-negative reals such that
\begin{equation}\label{basber0}  \tau_{k-1}+\tau_k\le 2f(k), \qq  \qq\sum_{k\in \Z}  \tau_k =\t.
\end{equation}
For instance  $\tau_k=  \frac{\t}{\t_X} \, (f(k)\wedge f(k+1))  $ is suitable, but the {\it real} value of $\tau_k$ does not matter, that is, the first condition in \eqref{basber0} is the only requirement to make this coupling method work.  This is important to notice for the sequel.
 \vskip 3 pt   Now   define   a pair of random variables $(V,\e)$   as follows:
  \begin{eqnarray}\label{ve} \qq\qq\begin{cases} \P\{ (V,\e)=( v_k,1)\}=\tau_k,      \cr
 \P\{ (V,\e)=( v_k,0)\}=f(k) -{\tau_{k-1}+\tau_k\over
2}    .  \end{cases}\qq (\forall k\in \Z)
\end{eqnarray}
 One easily   verifies that
\begin{eqnarray}\begin{cases}\P\{ V=v_k\} &= \  f(k)+ {\tau_{k }-\tau_{k-1}\over 2} ,
\cr
 \P\{ \e=1\} &= \ \t \ =\ 1-\P\{ \e=0\}   .
\end{cases}\end{eqnarray}
\vskip 5 pt 
 Further, for any $a,b\in \C$,
  \begin{equation}\label{fc.(V, e)}\E_{(V,\e)}\, e^{aV+b\e}\,=\, \sum_{k\in \Z}\Big\{\tau_k \,e^{av_k +b }+\big(f(k)-{\tau_{k-1}+\tau_k\over
2}  \big)\, e^{av_k}\, \Big\}.
\end{equation}
 
\begin{lemma} \label{bpr} Let $L$
be a Bernoulli random variable    which is independent of  $(V,\e)$, and put  
 $Z= V+ \e DL$.
We have $Z\buildrel{\mathcal D}\over{ =}X$.
\end{lemma}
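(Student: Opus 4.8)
The plan is to compute the point-mass function of $Z=V+\e DL$ directly and check that it coincides with $f$. Since $V$ is supported on the lattice $\mathcal L(v_0,D)$ and $\e DL$ takes values in $\{0,D\}$, while $v_k+D=v_{k+1}$ by the very definition $v_k=v_0+Dk$, the sum $Z$ is again supported on $\mathcal L(v_0,D)$; hence it suffices to prove that $\P\{Z=v_k\}=f(k)$ for every $k\in\Z$.

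To do this I would condition on the pair $(\e,L)$, using that $L$ is independent of $(V,\e)$ with $\P\{L=0\}=\P\{L=1\}=\tfrac12$. On the event $\{\e=0\}$ one has $Z=V$; on $\{\e=1,\,L=0\}$ again $Z=V$; and on $\{\e=1,\,L=1\}$ one has $Z=V+D$, so that $\{Z=v_k\}$ forces $V=v_{k-1}$. Reading the weights off \eqref{ve} gives
\begin{equation*}
\P\{Z=v_k\}=\Big(f(k)-\frac{\tau_{k-1}+\tau_k}{2}\Big)+\frac12\,\tau_k+\frac12\,\tau_{k-1}=f(k),
\end{equation*}
the two $\{\e=1\}$ contributions telescoping exactly against the $-\tfrac12(\tau_{k-1}+\tau_k)$ carried by the $\{\e=0\}$ atom. (Alternatively, one may compare characteristic functions: conditioning on $(V,\e)$ and averaging over $L$ turns $e^{itV}$ into $e^{itV}\,\frac{1+e^{it\e D}}{2}$, which equals $e^{itV}$ on $\{\e=0\}$ and $\tfrac12 e^{itV}(1+e^{itD})$ on $\{\e=1\}$; inserting the law \eqref{ve}, using $e^{it(v_k+D)}=e^{itv_{k+1}}$ and reindexing $\sum_k\tau_k e^{itv_{k+1}}=\sum_k\tau_{k-1}e^{itv_k}$, the $\tau$-terms cancel pairwise and what remains is $\sum_k f(k)e^{itv_k}=\E\,e^{itX}$, whence $Z\buildrel{\mathcal D}\over{=}X$.)

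There is no genuine obstacle here: the entire content is the bookkeeping in \eqref{ve} together with the single index shift coming from $v_{k-1}+D=v_k$, and no convergence question arises since every series in sight has nonnegative terms with total mass $1$. The one point not to overlook is that $\e$ multiplies $DL$, so that $L$ is ``switched off'' precisely on the atoms carrying the non-Bernoulli mass $f(k)-\tfrac12(\tau_{k-1}+\tau_k)$; this is exactly the mechanism that produces the pairwise cancellation, and exactly why the construction \eqref{basber0}--\eqref{ve} was arranged in this way.
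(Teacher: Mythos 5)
Your proposal is correct and is essentially the paper's own argument: both compute $\P\{Z=v_k\}$ by conditioning (the paper on $\e$, you on $(\e,L)$, which is the same computation), read the weights off \eqref{ve}, and observe the telescoping $\frac{\tau_{k-1}+\tau_k}{2}+\big(f(k)-\frac{\tau_{k-1}+\tau_k}{2}\big)=f(k)$. The parenthetical characteristic-function variant is a harmless restatement of the same cancellation.
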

\begin{proof} (\cite{MD},\cite{W}) Plainly,
\begin{eqnarray*}\P\{Z=v_k\}&=&\P\big\{ V+\e DL=v_k, \e=1\}+ \P\big\{ V+\e DL=v_k, \e=0\} \cr
&=&{\P\{ V=v_{k-1}, \e=1\}+\P\{
V=v_k, \e=1\}\over 2} +\P\{ V=v_k, \e=0\}
\cr&=& {\tau_{k-1}+ \tau_{k }\over 2} +f(k)-{\tau_{k-1}+ \tau_{k
}\over 2}
= f(k).
\end{eqnarray*}
\end{proof}
 
 \begin{remark}\label{bep.b.0} This decomposition also applies 
 if $X$ is a Bernoulli random variable, we have $X\buildrel{\mathcal D}\over{ =}V+ \e L$. See Remark \ref{bpe.b} where $(V,\e)$ is defined. The usefulness of this decomposition is made clear in the proof of the second part of Theorem \ref{t2}.\end{remark}
\vskip 8 pt 
Let  $ X_j,j=1,\ldots,n$, be independent random variables,  each   satisfying   assumption  (\ref{basber1})
 and let $0<\t_i\le \t_{X_i}$, $i=1,\ldots, n$. Iterated  applications of Lemma \ref{bpr} allow us to
 associate to them a
sequence of independent vectors $ (V_j,\e_j, L_j) $,   $j=1,\ldots,n$  such that
 \begin{eqnarray}\label{dec0} \big\{V_j+\e_jD   L_j,j=1,\ldots,n\big\}&\buildrel{\mathcal D}\over{ =}&\big\{X_j, j=1,\ldots,n\big\}  .
\end{eqnarray}
Further the sequences $\{(V_j,\e_j),j=1,\ldots,n\}
 $   and $\{L_j, j=1,\ldots,n\}$ are independent.
For each $j=1,\ldots,n$, the law of $(V_j,\e_j)$ is defined according to (\ref{ve}) with $\t=\t_j$.  And $\{L_j, j=1,\ldots,n\}$ is  a sequence  of
independent Bernoulli random variables. Let $\E_{\!L}$, $\P_{\!L}$ (resp. $\E_{(V,\e)}$, $\P_{(V,\e)}$) stand  for the integration
symbols   and probability symbols relatively to the
$\s$-algebra generated by the sequence  $\{L_j , j=1, \ldots, n\}$   (resp. $\{(V_j,\e_j), j=1, \ldots, n\}$). Set
\begin{equation}\label{dec} S_n =\sum_{j=1}^n X_j, \qq  W_n =\sum_{j=1}^n V_j,\qq M_n=\sum_{j=1}^n  \e_jL_j,  \quad B_n=\sum_{j=1}^n
 \e_j .
\end{equation}
 \begin{lemma} \label{lemd}We have the
representation
\begin{eqnarray*} \{S_k, 1\le k\le n\}&\buildrel{\mathcal D}\over{ =}&  \{ W_k  +  DM_k, 1\le k\le n\} .
\end{eqnarray*}
And  $M_n\buildrel{\mathcal D}\over{ =}\sum_{j=1}^{B_n } L_j$.
 \end{lemma}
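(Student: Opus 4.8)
The plan is to read off both assertions from the joint distributional identity \eqref{dec0}. For the first assertion, note that the partial–sum map $\Phi:(x_1,\dots,x_n)\mapsto\big(\sum_{j=1}^k x_j\big)_{1\le k\le n}$ from $\R^n$ to $\R^n$ is linear, hence Borel measurable, and equality in distribution is preserved when a common measurable map is applied to both sides. Applying $\Phi$ to \eqref{dec0} therefore gives
$$\Big\{\sum_{j=1}^k X_j,\ 1\le k\le n\Big\}\buildrel{\mathcal D}\over{ =}\Big\{\sum_{j=1}^k\big(V_j+\e_j D L_j\big),\ 1\le k\le n\Big\}.$$
The left–hand side is $\{S_k,1\le k\le n\}$, and, recalling \eqref{dec}, the $k$-th coordinate on the right is $\sum_{j=1}^k V_j+D\sum_{j=1}^k\e_j L_j=W_k+DM_k$. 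This is exactly the claimed representation.

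For the second assertion I would condition on the sequence $\{\e_j,1\le j\le n\}$. Since $\{L_j,1\le j\le n\}$ is independent of $\{(V_j,\e_j),1\le j\le n\}$ and the $L_j$ are \iid Bernoulli, after conditioning on $\{\e_j\}$ the $L_j$ are still \iid Bernoulli, while
$$M_n=\sum_{j=1}^n\e_jL_j=\sum_{j:\,\e_j=1}L_j$$
is a sum of exactly $B_n=\#\{j\le n:\e_j=1\}$ of them. Hence the conditional law of $M_n$ given $\{\e_j\}$ — and a fortiori given $B_n$, which is $\sigma(\{\e_j\})$-measurable — is that of a $\mathrm{Binomial}(B_n,p)$ variable, $p=\P\{L_1=1\}$. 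On the other hand $B_n$, being a function of $\{\e_j\}$ alone, is independent of $\{L_j\}$, so conditionally on $\{B_n=m\}$ the random sum $\sum_{j=1}^{B_n}L_j=\sum_{j=1}^m L_j$ is again $\mathrm{Binomial}(m,p)$. Thus $M_n$ and $\sum_{j=1}^{B_n}L_j$ have the same conditional distribution given $B_n$, and integrating over the law of $B_n$ yields $M_n\buildrel{\mathcal D}\over{ =}\sum_{j=1}^{B_n}L_j$.

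There is no serious obstacle here; the only points requiring care are the (trivial) measurability of the partial–sum map and, in the second part, being precise about what is conditioned on — one uses both that $\{L_j\}$ is independent of $\{(V_j,\e_j)\}$, so the $L_j$ remain \iid after conditioning on $\{\e_j\}$, and that $B_n$ is $\sigma(\{\e_j\})$-measurable, so conditioning on $\{\e_j\}$ may be replaced by conditioning on $B_n$. A slicker alternative for the second part is a characteristic-function computation: by independence of $\{L_j\}$ and $\{\e_j\}$ one has $\E\, e^{itM_n}=\E\prod_{j=1}^n\E_{\!L}\,e^{it\e_jL_j}=\E\big(\E_{\!L}\,e^{itL_1}\big)^{B_n}$, and the identical computation gives $\E\, e^{it\sum_{j=1}^{B_n}L_j}=\E\big(\E_{\!L}\,e^{itL_1}\big)^{B_n}$, whence equality of the two laws.
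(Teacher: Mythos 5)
Your proof is correct, and it supplies exactly the routine verification that the paper leaves implicit: the lemma is stated in the paper without proof as an immediate consequence of the construction \eqref{dec0} and the independence of $\{L_j\}$ from $\{(V_j,\e_j)\}$, and your two arguments (pushing \eqref{dec0} through the measurable partial-sum map, and conditioning on $\{\e_j\}$ — or equivalently the characteristic-function computation — for the identity $M_n\buildrel{\mathcal D}\over{=}\sum_{j=1}^{B_n}L_j$) are precisely the intended ones. No gaps.
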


We also need the following technical lemma.     

  \begin{lemma} \label{lemk} Let $\t_{X_i}=\t>0$, $i=1,\ldots, n$,   and let $0<\theta\le \t<1$. For any positive integer $n$, we have $${\mathbb P}\{B_n\le \theta n\}\le \Big( {1-\t\over
1-\theta}\Big)^{n(1-\theta)}\Big( {
\t\over
\theta}\Big)^{n \theta}.$$ Let $1-\t<\rho<1$. There exists
$ 0< \theta <\t$,  $\theta=\theta(\rho,\t)$ such   that  for any positive integer $n$
 \begin{eqnarray*}  {\mathbb P}\{B_n\le \theta n\}\le \rho^n.
\end{eqnarray*} \end{lemma}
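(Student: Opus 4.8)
The plan is to treat $B_n = \sum_{j=1}^n \e_j$ as a sum of independent Bernoulli$(\t)$ random variables (each $\e_j$ has $\P\{\e_j=1\}=\t$ by construction in \eqref{ve}, using $\t_{X_i}=\t$), and apply a standard exponential Chernoff bound for the lower tail. First I would write, for any $\lambda>0$,
\begin{equation*}
\P\{B_n\le\theta n\}=\P\{e^{-\lambda B_n}\ge e^{-\lambda\theta n}\}\le e^{\lambda\theta n}\,\E\,e^{-\lambda B_n}
= e^{\lambda\theta n}\big(1-\t+\t e^{-\lambda}\big)^n,
\end{equation*}
using independence and $\E e^{-\lambda\e_j}=1-\t+\t e^{-\lambda}$. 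Then I would optimize over $\lambda>0$: differentiating $\lambda\theta+\log(1-\t+\t e^{-\lambda})$ in $\lambda$ and setting it to zero gives $e^{-\lambda}=\frac{\theta(1-\t)}{\t(1-\theta)}$, which is in $(0,1)$ precisely because $\theta<\t$. Substituting this value back and simplifying the algebra yields exactly
\begin{equation*}
\P\{B_n\le\theta n\}\le\Big(\frac{1-\t}{1-\theta}\Big)^{n(1-\theta)}\Big(\frac{\t}{\theta}\Big)^{n\theta},
\end{equation*}
which is the first assertion. (This is the classical relative-entropy form of the Chernoff bound; the right-hand side is $e^{-nI(\theta)}$ with $I(\theta)$ the Kullback--Leibler divergence between Bernoulli$(\theta)$ and Bernoulli$(\t)$.)

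For the second assertion I would define, for $0<\theta<\t$,
\begin{equation*}
g(\theta)=\Big(\frac{1-\t}{1-\theta}\Big)^{(1-\theta)}\Big(\frac{\t}{\theta}\Big)^{\theta},
\end{equation*}
so that the bound just proved reads $\P\{B_n\le\theta n\}\le g(\theta)^n$. The key observation is the boundary behaviour as $\theta\downarrow 0$: then $(1-\theta)\to 1$, $(1-\t)^{1-\theta}\to 1-\t$, $(1-\theta)^{-(1-\theta)}\to 1$, and $(\t/\theta)^{\theta}=\exp(\theta\log(\t/\theta))\to 1$ since $\theta\log\theta\to 0$. Hence $g(\theta)\to 1-\t$ as $\theta\downarrow 0$. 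Since $1-\t<\rho<1$ by hypothesis and $g$ is continuous on $(0,\t)$, there exists $\theta=\theta(\rho,\t)\in(0,\t)$ with $g(\theta)<\rho$, and therefore $\P\{B_n\le\theta n\}\le g(\theta)^n\le\rho^n$ for every positive integer $n$.

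There is no real obstacle here: both parts are routine once the Chernoff machinery is invoked. The only point requiring a little care is the continuity/limit argument at $\theta=0$ — specifically checking that $g(0^+)=1-\t$ (the nuisance factor $(\t/\theta)^\theta\to 1$ must be verified via $\theta\log\theta\to 0$), which is what makes the constraint $1-\t<\rho$ exactly the right threshold. One could alternatively avoid explicit optimization and simply choose $\lambda$ small depending on $\rho$ and $\t$, then bound $(1-\t+\t e^{-\lambda})e^{\lambda\theta}$ directly; but presenting the sharp entropy form is cleaner and gives the first displayed inequality for free.
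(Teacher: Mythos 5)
Your proposal is correct and follows essentially the same route as the paper: an exponential Chebyshev (Chernoff) bound, optimization over $\lambda$ at $e^{-\lambda}=\frac{\theta(1-\t)}{\t(1-\theta)}$ yielding the entropy form, and then the observation that the rate function tends to $1-\t$ as $\theta\downarrow 0$ to select $\theta(\rho,\t)$. The only cosmetic difference is that the paper also records the monotonicity of $\psi(\theta)=\big(\frac{1-\t}{1-\theta}\big)^{1-\theta}\big(\frac{\t}{\theta}\big)^{\theta}$ and picks $\theta$ with $\psi(\theta)=\rho$ exactly, whereas you invoke continuity and the boundary limit, which suffices.
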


\begin{proof} 
   By Tchebycheff's inequality, for any $\lambda \ge 0$,
 \begin{eqnarray*}{\mathbb P}\{B_n\le \theta n\}&=&{\mathbb P}\{e^{-\lambda B_n} \ge e^{-\lambda\theta n}\}\le e^{-\lambda\theta n}{\mathbb E\,} e^{ \lambda B_n}=\big(e^{ \lambda\theta  }{\mathbb E\,} e^{
 \lambda  \e}\big)^n \cr &=& \Big(e^{ \lambda\theta  }\big[1-\t(1-e^{-\lambda})\big] \Big)^n.
 \end{eqnarray*}
 Put  $x=e^\lambda  $, ($x\ge 1$) and let $\p(x)=x^{ \theta  }\big[1-\t(1-x^{-1})\big]$. Then ${\mathbb P}\{B_n\le \theta n\}\le \p(x)^n $. We have
 $\p'(x)=x^{\theta-2}(x\theta(1-\t)-(1-\theta)\t)$. Thus ${\varphi}$ reaches its minimum   at the value  $x_0={(1-\theta)\t\over \theta(1-\t)}.$
 And we have  $ \p(x_0)=\psi(\theta)$, where we put
 $$ \psi(\theta)=\Big( {1-\t\over 1-\theta}\Big)^{1-\theta}\Big( { \t\over \theta}\Big)^{ \theta}, \qq 0<\theta\le \t.$$
  We note that $\psi({\vartheta} )=1$, $\lim_{\theta\to 0+} \psi(\theta)= 1-{\vartheta} $ and $\psi$  is nondecreasing ($(\log \psi)'(\theta)=\log\big( {
 \t\over 1-{\vartheta} }\big/{ \theta \over 1-\theta }\big)\ge 0$, $0<\theta\le \t$).
  Let $1-\t<\rho<1$. We
 may   select
 $ 0< \theta_{\rho,\t}<\t$ depending on $\rho,\t$ only such   that $\psi(\theta)=\rho$. This yields
   the bound
 \begin{eqnarray}{\mathbb P}\{B_n\le \theta n\}\le \rho^n.  
 \end{eqnarray}
 \end{proof}



\section{Proof of Theorem \ref{t2}.}  \label{s5}
 Let $0<\t \le \t_X$. Let also $  \{ \tau_k, k\in \Z\}$ be  a
sequence       of   non-negative reals satisfying condition \eqref{basber0}, and which   will be specified later on together with $\t$. We apply Lemma \ref{lemd} and 
   denote again $X_j= V_j+D\e_jL_j$, $S_n= W_n  +  M_n$,
$j,\, n\ge 1$. 
    \vskip 3 pt
We now  note that \begin{eqnarray}\label{dep} \P \{d|S_n+u  \}   &=& \E_{(V,\e)}   \,   \P_{\!L}
\Big\{d|\Big(  D\sum_{j= 1}^n \e_jL_j+W_n +u \Big)
\Big\}
. \end{eqnarray}
As    $\sum_{j= 1}^n \e_jL_j\buildrel{\mathcal D}\over{ =}\sum_{j=1}^{B_n } L_j$, we have
 \begin{eqnarray*}        \P_{\!L}
\Big\{d\,\big| \Big(  D\sum_{j= 1}^n \e_jL_j+W_n +u \Big)
\Big\}&=&    \P_{\!L}
\Big\{d\,\big|\Big(  D\sum_{j=1}^{B_n } L_j+W_n +u \Big)
\Big\}
\cr &=&    \P_{\!L}
\Big\{d\,\big|\Big(  D\sum_{j=1}^{B_n } L_j+W_n +u \Big)
\Big\}. 
\end{eqnarray*}
 By assumption $D=1$.   Let 
 $A_n= \big\{B_n\le \theta n \big\}.$ 
We  have
 \begin{eqnarray*}& &  \P \{d|S_n +u  \}   -  {1\over d}\sum_{ 0\le |\ell|<d }
 \E_{(V,\e)}   \, e^{i \pi(2(W_n+u)+B_n) {\ell\over   d }
 - {  B_n\pi^2\ell^2\over 2 d^2}  } 
\cr &= &  \E_{(V,\e)} 
  \, \Big( \chi(A_n)+\chi(A_n^c)\Big) 
 \, \Big\{    \P_{\!L}
\big\{d|\big(  \sum_{j= 1}^n \e_jL_j+W_n+u  \big)
\big\}
\cr & & -{1\over d}\sum_{ 0\le |\ell|<d }
     \, e^{i \pi(2(W_n+u)+B_n) {\ell\over   d }
 - {  B_n\pi^2\ell^2\over 2 d^2}  } 
\Big\}.
\end{eqnarray*}

\vskip 3 pt  

On the one hand 
by Lemma \ref{lemk},
\begin{align*}  \E_{(V,\e)} 
   \chi(A_n) 
 \, \Big|    \P_{\!L}
\big\{d|\big(  \sum_{j= 1}^n \e_jL_j+W_n +u \big)
\big\}  -& {1\over d}\sum_{ 0\le |\ell|<d }
     \, e^{i \pi(2(W_n+u)+B_n) {\ell\over   d }{  B_n\pi^2\ell^2\over 2 d^2}  } 
\Big|
 \cr   &    \qq\qq \le \,2 \P\{A_n \} \, \le  \, 2\rho^n.
\end{align*}

On the other hand,  
Theorem \ref{estPd.Bn.u} implies that for some $C$ universal,
  \begin{align}\label{basic.P}   
  \sup_{u\ge 0}\,\sup_{d\ge 2} &\Big|     \P_{\!L}
\Big\{ d\,|\,    \sum_{j=1}^{B_n } L_j+W_n   +u
\Big\}
\cr & -{1\over d}\sum_{ 0\le |\ell|<d }
 e^{i \pi(2(W_n+u)+B_n) {\ell\over   d }- {  B_n\pi^2\ell^2\over 2 d^2}  } \Big|
\,\le \, C \,\frac{(\log B_n)^{5/2}}{B_n^{3/2}} .
 \end{align}
This bound being true $\P_{(V,\e)}$-almost surely. We  observe that the function $g(x)=\frac{(\log x)^{5/2}}{x^{3/2}}$ decreases on the half-line $[e^{5/3},\infty)$.
 Thus for $  n \ge \max(n_0,e^{5/3}/\theta):=N$,  where $n_0$ arises from  Theorem \ref{estPd.Bn.u},
 \begin{align*}  \E_{(V,\e)} 
   \chi(A_n^c) 
 \, \Big|    \P_{\!L}
&\big\{d|\big(   \sum_{j= 1}^n \e_jL_j+W_n  \big)
\big\}
    -{1\over d}\sum_{ 0\le |\ell|<d }
     \, e^{i \pi(2W_n+B_n) {\ell\over   d }{  B_n\pi^2\ell^2\over 2 d^2}  } 
\Big|
\cr& \le   C \,\E_{(V,\e)} 
   \chi\big\{B_n> \theta n
\big\} \frac{(\log B_n)^{5/2}}{B_n^{3/2}}\  \le    \ C   \frac{(\log  \theta n)^{5/2}}{ (\theta n)^{3/2}}
\cr&
  \le \,  \frac{C }{ \theta ^{3/2}}\ \frac{(\log    n)^{5/2}}{ n^{3/2}} \,,
\end{align*}
uniformly over $u\ge 0$, $d\ge 2$.
Therefore, 
 \begin{align}\label{ddivSniid}    \sup_{u\ge 0}\,\sup_{d\ge 2}\Big|\, \P \{d|S_n+u  \}   -  {1\over d}\sum_{ 0\le |\ell|<d }
 \E_{(V,\e)}    \, &e^{i \pi(2(W_n+u)+B_n)  {\ell\over   d }  
 - {  B_n\pi^2\ell^2\over 2 d^2}} \,\Big|
 \cr &  \,\le  \, \frac{C }{ \theta ^{3/2}}\   \frac{(\log  n)^{5/2}}{ n^{3/2}}+2\rho^n.
\end{align}
\vskip 4 pt Now, as $W_n =\sum_{j=1}^n V_j$,  $B_n=\sum_{j=1}^n
 \e_j$,
\begin{eqnarray*}    {1\over d}\sum_{ 0\le |\ell|<d }
 e^{i \pi(2(W_n+u)+B_n){\ell\over   d }-{  B_n \pi^2\ell^2\over 2 d^2}}
&=&  {1\over d}\sum_{ 0\le |\ell|<d }
e^{i \pi (2(\sum_{j=1}^n V_j+u)+\sum_{j=1}^n\e_j ){\ell\over   d }
-( \sum_{j=1}^n \e_j){ \pi^2\ell^2\over 2 d^2}} 
 \cr &=&  {1\over d}\sum_{ 0\le |\ell|<d }e^{ i \pi (2\sum_{j=1}^n V_j+u){\ell\over   d }+(\sum_{j=1}^n
 \e_j)(i\pi {\ell\over   d }-{ \pi^2\ell^2\over 2 d^2})}
   . 
 \end{eqnarray*}
 By integrating,
  \begin{align*}     {1\over d}\sum_{ 0\le |\ell|<d }\E_{(V,\e)}\,&e^{ i \pi (2\sum_{j=1}^n V_j+u){\ell\over   d }+(\sum_{j=1}^n
 \e_j)(i\pi {\ell\over   d }-{ \pi^2\ell^2\over 2 d^2})}
 \cr &\ =  \, {1\over d}\sum_{ 0\le |\ell|<d } \Big(\E_{(V,\e)}\,e^{ i \pi ( 2V +u){\ell\over   d }+
 \e  (i\pi {\ell\over   d }-{ \pi^2\ell^2\over 2 d^2})}\Big)^n
. 
 \end{align*}
Recalling   \eqref{fc.(V, e)},
 \begin{eqnarray*}\E_{(V,\e)}\, e^{aV+b\e}\,=\, \sum_{k\in \Z}\Big\{\tau_k \,e^{av_k +b }+\big(f(k)-{\tau_{k-1}+\tau_k\over
2}  \,\big)\, e^{av_k}\, \Big\}
\cr ,\end{eqnarray*} 
it follows that 
\begin{align*}
\label{ }\E_{(V,\e)}&\, \,e^{ i \pi ( 2V +u){\ell\over   d }+
 \e  (i\pi {\ell\over   d }-{ \pi^2\ell^2\over 2 d^2})}
\cr&  =\, \sum_{k\in \Z}\Big(\tau_k \,e^{i \pi {\ell\over   d }(2v_k +u)+(i\pi {\ell\over   d }-{ \pi^2\ell^2\over 2 d^2}) }+ \big(f(k)-{\tau_{k-1}+\tau_k\over
2}  \,\big)  \, e^{ i \pi ( 2v_k +u)  {\ell\over   d }}\, \Big).
 \end{align*} 
 
Hence we are left with the sum
\begin{eqnarray} 
\label{sum1}  
   & &{1\over d} \sum_{ 0\le |\ell|<d }\E_{(V,\e)}\,
   e^{ i \pi (2\sum_{j=1}^n V_j+u){\ell\over   d }+
   (\sum_{j=1}^n \e_j)(i\pi {\ell\over   d }-{ \pi^2\ell^2\over 2 d^2})} 
 \cr &  = & {1\over d}\sum_{ 0\le |\ell|<d } \Big(\E_{(V,\e)}\,e^{ i \pi ( 2V +u){\ell\over   d }+
 \e  (i\pi {\ell\over   d }-{ \pi^2\ell^2\over 2 d^2})}\Big)^n
  \cr &  = & {1\over d}\sum_{ 0\le |\ell|<d } \bigg(\sum_{k\in \Z}\Big(\tau_k \,e^{i \pi {\ell\over   d }(2v_k +u)+(i\pi {\ell\over   d }-{ \pi^2\ell^2\over 2 d^2}) }+ \big(f(k)-{\tau_{k-1}+\tau_k\over
2}  \,\big)  \, e^{ i \pi ( 2v_k +u)  {\ell\over   d }}\, \Big)\bigg)^n.
\end{eqnarray}
   \vskip 9 pt 

We now  specify      the sequence $  \{ \tau_k, k\in \Z\}$ and hence $\t$.
     We choose $\t$ according to \eqref{basber1} so that by \eqref{tau.mu} the equation   
 \begin{equation}
\label{tau.mu1}
 {\tau_{k-1}+\tau_k\over 2} =f(k)  - \m_k,
\end{equation}
 is satisfied for all $k \in \Z$, recalling that $  \{ \m_k, k\in \Z\}$ is a sequence of non-negative  reals satisfying condition  \eqref{muk}, $\m= \sum_{k\in \Z}\m_k$ satisfies condition \eqref{mu}, namely $1-\m<\t_X$.
 
\vskip 2 pt The  solutions to Equation \eqref{tau.mu}    are 
\begin{eqnarray}\label{tau.mu1.sol}\qq \tau_m= \begin{cases}  \sum_{\ell\le j} (x_{2\ell}-x_{2\ell-1}) \ &\quad \hbox{ if   $m=2j$},
\cr   \sum_{\ell\le j} (x_{2\ell+1}-x_{2\ell}) \ &\quad \hbox{   $m=2j+1$},
\end{cases}
\end{eqnarray}
 where we set $x_u=2 (f(u) - \m_u )$. 
So that,
$$ \tau_{2j}+\tau_{2j+1}=\sum_{\ell\le j} (x_{2\ell}-x_{2\ell-1})+\sum_{\ell\le j} (x_{2\ell+1}-x_{2\ell})=x_{2j+1}=2 (f(2j+1) - \m_{2j+1} ).$$
Quite similarly,
$$ \tau_{2j}+\tau_{2j-1} =x_{2j}=2 (f(2j) - \m_{2j} ).$$
So that \eqref{tau.mu1} holds.
 By construction 
\begin{equation}\label{tau.f.theta}
  \tau_{k-1}+\tau_k\le 2f(k), \qq  \qq \t:= \sum_{k\in \Z}  \tau_k = 1- \m.
\end{equation}
Thus   \eqref{basber0}  is obviously satisfied.
Note that quantities $\tau_k$, $\m_k$ do not rely on $n$. 
\vskip 5 pt 
  Using \eqref{widetildeX},
\eqref{sum1} may be continued as follows,  
\begin{eqnarray} \label{sum2}
& &\, 
\cr &=& {1\over d}\sum_{ 0\le |\ell|<d } \bigg(\sum_{k\in \Z}\Big(\tau_k \,e^{i \pi {\ell\over   d }(2v_k +u)+(i\pi {\ell\over   d }-{ \pi^2\ell^2\over 2 d^2}) }
\cr & &\qquad \qquad \qquad \qquad \qquad + \big(f(k)-{\tau_{k-1}+\tau_k\over
2}  \,\big)  \, e^{ i \pi ( 2v_k +u)  {\ell\over   d }}\, \Big)\bigg)^n
\cr &=& {1\over d}\sum_{ 0\le |\ell|<d } \bigg(\sum_{k\in \Z}\Big(\tau_k \,e^{i \pi {\ell\over   d }(2v_k +u)+(i\pi {\ell\over   d }-{ \pi^2\ell^2\over 2 d^2}) }+ \m_k  \, e^{ i \pi ( 2v_k +u)  {\ell\over   d }}\, \Big)\bigg)^n
\cr  
 &=&  {1\over d}\sum_{ 0\le |\ell|<d } \Big( e^{ (i\pi {\ell\over   d }-{ \pi^2\ell^2\over 2 d^2}) } \t\,
 \E \,e^{i \pi  {\ell\over   d }( 2\widetilde X +u)   }   +s\big( \pi ( 2v_k +u)  {\ell\over   d }\big)\Big)^n.
\end{eqnarray}
By carrying back estimate \eqref{sum2}   to \eqref{ddivSniid}, we finally get in view of \eqref{sum1},
 \begin{align}\label{ddiv.final}
  \Big| \P \{d|S_n +u \}   -  {1\over d}\sum_{ 0\le |\ell|<d } & \Big( e^{ (i\pi {\ell\over   d }-{ \pi^2\ell^2\over 2 d^2}) } \t\,
 \E \,e^{i \pi  {\ell\over   d }( 2\widetilde X +u)   }   +s\big( \pi ( 2v_k +u)  {\ell\over   d }\big)\Big)^n \Big|
  \cr  &\le    \frac{C }{ \theta ^{3/2}}\   \frac{(\log  n)^{5/2}}{ n^{3/2}}+2\rho^n.
\end{align}
Here $C$  is universal, and this is true for all    $u\ge 0$, $d\ge 2$, $n\ge 2$. The proof of the first assertion is  achieved by taking   in both sides the supremum over all $u\ge 0$ and $d\ge 2$, and noticing that   in the inner sum of the left-term, the summand  corresponding to $\ell = 0$ is equal to  $\t+\m=1$.
\vskip 8 pt 
 We now prove the second assertion. An obvious consequence of  the first assertion is  that for $d\ge 2$, $n\ge 2$, 
   \begin{align}\label{ddiv.final.conseq.} \sup_{u\ge 0} \, &\Big| \P \{d|S_n +u \}   - {1\over d}\Big|
   \cr & \,\le \, {1\over d}\sum_{ 0< |\ell|<d }  \big( e^{ -{ \pi^2\ell^2\over 2 d^2} } \t\,
    +\m \big)^n  
  +    \frac{C }{ \theta ^{3/2}}\   \frac{(\log  n)^{5/2}}{ n^{3/2}}+2\rho^n.
\end{align}
 On expanding the summand and recalling that  $\m=1-\t$, we get    
   \ \begin{align}\label{ddiv.final.conseq.1}     \sup_{u\ge 0} \, \Big| \P \{d|S_n+u  \}   - {1\over d}\Big|&
   \,\le \, {1\over d}\,\sum_{ 0< |\ell|<d }\   \sum_{m=0}^n {n\choose m} \t^m\,e^{  -m{ \pi^2\ell^2\over 2 d^2}  }  (1-\t)^{n-m}
     \cr  &  \quad +    \frac{C }{ \theta ^{3/2}}\   \frac{(\log  n)^{5/2}}{ n^{3/2}}+2\rho^n
    .
\end{align}
 {Let $\mathcal D$ be a test set of divisors $\ge 2$, and note $\mathcal D_\p$ the section of $\mathcal D$ at height $\p$, and $|\mathcal D_\p|$   its   cardinality. Let $0<\e<1$.   Then for all reals $u\ge 0$, all integers $\p\ge 2$,
 \begin{eqnarray}\label{ddiv.final.conseq.2}& &   \,{1\over |\mathcal D_\p |}  \sum_{d\in \mathcal D_\p }\,\Big| \P \{d|S_n+u  \}   - {1\over d}\Big|
    \cr &      \le & {1\over |\mathcal D_\p |} \sum_{d\in \mathcal D_\p }    \,{1\over d}\sum_{ 0< |\ell|<d }  \,  \sum_{m=0}^n {n\choose m} \t^m(1-\t)^{n-m}
   \,   e^{  -m{ \pi^2\ell^2\over 2 d^2}  }  
      +    \frac{C }{ \theta ^{3/2}}\   \frac{(\log  n)^{5/2}}{ n^{3/2}}+2\rho^n
\cr &      \le & {1\over |\mathcal D_\p |} \sum_{d\in \mathcal D_\p }    \,{1\over d}\sum_{ 0< |\ell|<d }  \,  \sum_{m=0}^n {n\choose m} \t^m\big(1-(e^{  -\e { \pi^2\ell^2\over 2 d^2}  }\t)\big)^{n-m}
   \,   e^{  -m{ \pi^2\ell^2\over 2 d^2}  }  
\cr & &      +    \frac{C }{ \theta ^{3/2}}\   \frac{(\log  n)^{5/2}}{ n^{3/2}}+2\rho^n  ,\end{eqnarray}
 where we have bounded in the last line of calculations $(1-\t)$ by $\big(1-(e^{  -\e { \pi^2\ell^2\over 2 d^2}  }\t)\big)$. This device will permit us to well estimate  the above  sums.\vskip 5 pt  
 Then,
 \begin{align}\label{est.1} \sum_{n=2}^\infty \, &\sup_{u\ge 0} \, \sup_{\p\ge 2}\,  {1\over |\mathcal D_\p |} \sum_{d\in \mathcal D_\p } \,\Big| \P \{d|S_n+u  \}   - {1\over d}\Big|
  \cr   &\,\le  \sup_{u\ge 0} \,\sup_{\p\ge 2}\,{1\over |\mathcal D_\p |} \sum_{d\in \mathcal D_\p }\sum_{ 0< |\ell|<d }   \,{1\over d}\,\sum_{n=2}^\infty \ \sum_{m=0}^n {n\choose m}  e^{  - m{ \pi^2\ell^2\over 2 d^2}  }         
 \t^m  \big(1-(e^{  -\e { \pi^2\ell^2/2 d^2}  }\t)\big)^{n-m} 
   +H,
\end{align}  
where we set   
$$H=    \frac{C }{ \theta ^{3/2}}\  \sum_{n=2}^\infty  \frac{(\log {n})^{5/2} }{ ({n})^{3/2}}+2\sum_{n=2}^\infty  \rho^n.$$

Now we utilize a second   device. By permuting sums, 
\begin{align} &\,\le   \sup_{u\ge 0} \,\sup_{\p\ge 2}\,\,{1\over |\mathcal D_\p |}  \sum_{d\in \mathcal D_\p }\sum_{ 0< |\ell|<d }   \,{1\over d} \,\sum_{m=0}^\infty e^{  -  m{ \pi^2\ell^2\over 2 d^2}  } \t^m          \Big\{\,\sum_{n\ge m } {n\choose m} \big(1-(e^{  -\e { \pi^2\ell^2\over 2 d^2}  }\t)\big)^{n-m}    \Big\}
+ H.
\end{align}
 Let $\t_1= e^{  -\e { \pi^2\ell^2/ 2 d^2}  }\t$. The   sum in brackets   $\sum_{n\ge m} {n\choose m}(1-\t_1)^{n-m}    
$ is $\t_1^{-m-1}$. This follows from the formula $\sum_{v=0}^\infty {v+z\choose z}
\, x^v={1\over
(1-x)^{z+1}} $ valid for $|x|<1$,  $z\ge 0$. 
\vskip 3 pt 
We can thus continue as follows  
\begin{eqnarray}
   &=&  \sup_{u\ge 0} \,\sup_{\p\ge 2}\,  {1\over |\mathcal D_\p |}  \sum_{d\in \mathcal D_\p }\sum_{ 0< |\ell|<d }  \,{1\over d} \,\sum_{m=0}^\infty e^{  -  m{ \pi^2\ell^2\over 2 d^2}  } \t^m        
   (e^{  -\e { \pi^2\ell^2\over 2 d^2}  }\t)^{-m-1}
+H 
 \cr &=&  \sup_{u\ge 0} \,\sup_{\p\ge 2}\, {1\over |\mathcal D_\p |}  \sum_{d\in \mathcal D_\p }\sum_{ 0< |\ell|<d }   \,{e^{   \e { \pi^2\ell^2\over 2 d^2}  }\over \t\,d} \,\sum_{m=0}^\infty e^{   -(1-\e)  m{ \pi^2\ell^2\over 2 d^2}  }         
     +H
     \cr &=&   \sup_{u\ge 0} \,\sup_{\p\ge 2}\, {1\over \t\,|\mathcal D_\p |}  \sum_{d\in \mathcal D_\p }\sum_{ 0< |\ell|<d }   \, 
         \frac{e^{   \e { \pi^2\ell^2/ 2 d^2}  }}
     {d(1-e^{   -(1-\e)   { \pi^2\ell^2/ 2 d^2} }   ) }       +H
   \cr &\le &   \sup_{u\ge 0} \,\sup_{\p\ge 2}\, {1\over \t\,|\mathcal D_\p |}  \sum_{d\in \mathcal D_\p }\sum_{ 0< |\ell|<d }   \, 
         \frac{e^{   \e { \pi^2/ 2 }  }}
     {d(1-e^{   -    { (1-\e)\pi^2/ 2 d^2} }   ) }       +H
      \cr &\le &    
       \frac{2e^{   \e { \pi^2/ 2 }  }}
     {\t\, (1-e^{   -    { (1-\e)\pi^2/ 8} }   ) }
 \, 
                +H.  
      \end{eqnarray} 
By replacing $H$ by its value we obtain   the following bound, 
 \begin{align}\label{est.1.} \sum_{n=2}^\infty \,  \sup_{u\ge 0} \,\sup_{\p\ge 2}\,&{1\over |\mathcal D_\p |} \sum_{d\in \mathcal D_\p } \,\Big| \P \{d|S_n+u  \}   - {1\over d}\Big|
  \cr   \le  &  \  \frac{2e^{   \e { \pi^2/ 2 }  }}
     {\t\, (1-e^{   -    { (1-\e)\pi^2/ 8} }   ) }
 \,  
     \   +  \frac{C }{ \theta ^{3/2}}\  \sum_{n=2}^\infty  \frac{(\log {n})^{5/2} }{ ({n})^{3/2}}+2\sum_{n=2}^\infty  \rho^n.
\end{align}
From there by taking $\e= 1/2$ assertion (ii) easily follows.


}

\begin{remark}\label{bpe.b} If $X$ is a Bernoulli random variable, we have a similar decomposition. First $D=1$, $v_k=k$, $k\in \Z$, $f(k)=1/2$ if $k\in\{ 0,1\}$ and is 0 otherwise.  Whence $f(k)\wedge f(k+1)=1/2$ if $k=0$, and   0 otherwise, so that  $\t_X=1/2$. 
The  first condition in \eqref{basber0} further implies that $\tau_k=0$   if $k\neq0$, and   $\tau_0\le 1$. We have $\t=\tau_0$,  where $\t$ can be chosen in $[0, 1/2]$. 
This is a case where the terms of  the sequence $  \{ \tau_k, k\in \Z\}$ are all   0 except $\tau_0$. The pair $(V,\e)$ is defined as follows 

\begin{eqnarray}\label{ve.bernoulli}
 \qq\qq\begin{cases} \P\{ (V,\e)=( 0,1)\}=\tau_0,      \cr
 \P\{ (V,\e)=( 0,0)\}=1/2 -{ \tau_0\over
2}
   \cr
 \P\{ (V,\e)=( 1,0)\}=1/2 -{\tau_{0} \over
2}    .  \end{cases}\end{eqnarray}
Thus $\P\{ V= 0\}=1/2 +\tau_0/2$, $\P\{ V= 1\}=1/2-\tau_0/2$, $\P\{ V= k\}=0$ otherwise, $\P\{ \e=1\}  =  \t   =\ 1-\P\{ \e=0\}$. By Lemma \ref{bpr} $X=  V+ \e  L$. Furthermore 
$$ \E_{(V,\e)}\, e^{aV+b\e}\,=\,  \tau_0 \,e^{b }+\big(1/2-{ \tau_0\over
2}  \big)+\big(1/2-{ \tau_0\over
2}  \big)\,e^{a} .$$
  \end{remark}

\begin{remark}
We claim that   
\begin{eqnarray}\label{sup.phi}
\sup_{2\le \p\le n} \, \frac{1}{\p}\sum_{2\le d<\p}{1\over d}\  \sum_{ 1\le \ell <d }\, e^{  -m{ \pi^2\ell^2\over 2 d^2}  } &\le &     \frac{C}{\sqrt m}.  \end{eqnarray} 
  Let $A $ and   $U$ be positive {\it reals}. Then for $d\ge 1$,
$$ { 1\over 2d}e^{-A({U\over
d})^2}\le \int_{U\over d+1}^{U\over d}e^{-At^2}{  d t\over t}.  $$Indeed,
$$ \int_{U\over d+1}^{U\over d}e^{-At^2}{  d t\over t}\ge e^{-A({U\over d})^2}\int_{U\over d+1}^{U\over d} {  d t\over t}=e^{-A({U\over
d})^2}\log (1+{ 1\over d})\ge  { 1\over 2d}e^{-A({U\over
d})^2} , $$
since $\log 1+x\ge \frac{x}{2}$, if $0\le x\le 1$. Apply this with the choices $U=\sqrt m \ell$, $A= \frac{\pi^2}{2}$. We get
$$ { 1\over 2d}e^{-\frac{\pi^2}{2}({\sqrt m \ell\over
d})^2}\le \int_{{\sqrt m \ell\over d+1}}^{{\sqrt m \ell\over d}}e^{-\frac{\pi^2}{2}t^2}{  d t\over t}.$$
Thus 
\begin{eqnarray*}
& &\sum_{2\le d<\p}{1\over d}\  \sum_{ 1\le \ell <d }\, e^{  -m{ \pi^2\ell^2\over 2 d^2}  } 
\, \le \, 2 \sum_{2\le d<\p} \  \sum_{ 0< \ell <d }\int_{{\sqrt m \ell \over d+1}}^{{\sqrt m \ell\over d}}e^{-\frac{\pi^2}{2}t^2}{  d t\over t}
\cr &= & 2\sum_{ 1\le \ell <\p }\sum_{\ell \le d<\p} \ \int_{{\sqrt m \ell\over d+1}}^{{\sqrt m \ell\over d}}e^{-\frac{\pi^2}{2}t^2}{  d t\over t}
\,=\,2\sum_{ 1\le \ell <\p }\ \int^{\sqrt m }_{{\sqrt m \ell \over \p}}   e^{-\frac{\pi^2}{2}t^2}{  d t\over t}
\cr &\le  & 2 \ \int^{\sqrt m }_{\sqrt m  \over \p} \Big(\sum_{ 1\le \ell\le  \min(\p, \frac{\p t}{\sqrt m }}1\Big)  e^{-\frac{\pi^2}{2}t^2}{  d t\over t}
\cr &\le  & 2 \  \frac{\p}{\sqrt m}\,\int^{\sqrt m }_{{\sqrt m  \over \p}}    e^{-\frac{\pi^2}{2}t^2}   d t 
\, \le \,  2 \  \frac{\p}{\sqrt m}\,\int^{\infty }_{0}    e^{-\frac{\pi^2}{2}t^2}   d t
\, = \,   C\,  \frac{\p}{\sqrt m}.  \end{eqnarray*}
This is true whatsoever $\p$ such as $2\le \p\le n$. Therefore, 
\begin{eqnarray*}
\sup_{2\le \p\le n} \, \frac{1}{\p}\sum_{2\le d<\p}{1\over d}\  \sum_{ 1\le \ell <d }\, e^{  -m{ \pi^2\ell^2\over 2 d^2}  } &\le &    \frac{C}{\sqrt m}, \end{eqnarray*}
which is \eqref{sup.phi}. 
\end{remark}

\vskip 8 pt 
  
 \begin{remark}
  It concerns the role of the summand 
$\big( e^{ -{ \pi^2\ell^2\over  2 d^2} } \t\,
    +\m \big)^n$. Let $p>1$. 
    By H\"older's inequality, recalling that $\m=1-\t$,
\begin{eqnarray*} \big( e^{  -{ \pi^2\ell^2\over 2 d^2}  } \t\,
   +\m \big)^{np} &=&\Big( \sum_{m=0}^n {n\choose m} \t^m(1-\t)^{n-m}e^{  -m { \pi^2\ell^2\over 2 d^2}  }  \Big)^{ p}
   \cr & \le &\sum_{m=0}^n {n\choose m} \t^m(1-\t)^{n-m}e^{  -mp{ \pi^2\ell^2\over 2 d^2}  }
   \cr &= & \big( e^{  -{ \pi^2p\ell^2\over 2 d^2}  } \t\,
   +\m \big)^{n } .
   \end{eqnarray*}
 So that if $p $ is  an integer, the summand corresponding to $S_{np}$ expresses as the summand corresponding to $S_n$, altered by a factor $p$ in the exponent term.
 \end{remark}

 {\baselineskip 9pt

\end{document}